\numberwithin{equation}{section}
\newtheorem{theorem}{Theorem}[section]
\newtheorem{lemma}[theorem]{Lemma}
\newtheorem{proposition}[theorem]{Proposition}
\newtheorem{definition}[theorem]{Definition}
\newtheorem{example}[theorem]{Example}
\newtheorem{remark}[theorem]{Remark}
\renewcommand{\SS}{\mathscr{S}}
\newcommand{\UU}{\mathscr{U}}
\newcommand{\TT}{\mathscr{T}}
\newcommand{\sfz}{\mathsf z}
\newcommand{\RR}{\mathscr {R}}
\newcommand{\LL}{\mathscr{L}}
\newcommand{\sfx}{\mathsf x}
\newcommand{\sfy}{\mathsf y}
\newcommand{\sft}{\mathsf t}
\newcommand{\rmx}{{\mathrm x}}
\title{Minimal solutions to generalized $\Lambda$-semiflows and gradient flows in metric spaces}
\begin{document}

\author{
Florentine Catharina Flei$\ss$ner 
\thanks{Technische Universit\"at M\"unchen  email:
  \textsf{fleissne@ma.tum.de}.
  } 
}
 \date{}

\maketitle

\begin{abstract}    
Generalized $\Lambda$-semiflows are an abstraction of semiflows with non-periodic solutions, for which there may be more than one solution corresponding to given initial data. A select class of solutions to generalized $\Lambda$-semiflows is introduced. It is proved that such minimal solutions are unique corresponding to given ranges and generate all other solutions by time reparametrization. Special qualities of minimal solutions are shown. 

The concept of minimal solutions is applied to gradient flows in metric spaces and generalized semiflows. Generalized semiflows have been introduced by Ball in \cite{ball2000continuity}.  
\end{abstract}

{\small\tableofcontents}

\section{Introduction} 

Minimal solutions form a particular class of solutions to evolution problems possibly having more than one solution corresponding to given initial data. The idea is to select one particular solution corresponding to each given range of solutions. 

The concept is introduced in [\cite{sf2017}, Section 3] for gradient flows in Hilbert spaces, generated by continuously differentiable functions. In \cite{sf2017}, the reverse approximation of gradient flows as minimizing movements is studied; the notion of minimal solutions proves crucial in the considerations therein.

In the present paper, an abstract approach is taken with the aim of introducing the concept of minimal solutions to a wide variety of evolution problems with non-unique solutions, on a topological space $\SS$, endowed with a Hausdorff topology. 

\medskip\noindent
\paragraph{Minimal solutions} 

A partial order $\succ$ between solutions $u:[0, +\infty) \to \SS$ sharing the same range $\RR = \RR[u] := u([0, +\infty))$ in $\SS$ is defined. We say that $u\succ v$ if there exists an increasing $1$-Lipschitz map $\sfz: [0, +\infty) \to [0, +\infty)$ with $\sfz(0) = 0$ such that
\begin{equation}\label{eq: intro 1}
u(t)  =  v(\sfz(t)) \quad \text{ for all } t\geq 0. 
\end{equation}
A solution $u$ is minimal if for every solution $v$, $u\succ v$ yields $u = v$. (see Definition \ref{def: minimal solution})

Within the abstract framework of generalized $\Lambda$-semiflow (introduced in Section \ref{subsec: 1}), it is shown that, under natural hypotheses, 
\begin{enumerate}
\item there exists a unique minimal solution corresponding to each range $\RR = \RR[u]$, \label{itm: intro 1}
\item each minimal solution induces all other solutions with the same range by time reparametrization (\ref{eq: intro 1}), and \label{itm: intro 2}
\item reaches every point in the range in minimal time \label{itm: intro 3} (see Theorem \ref{thm: minimal solution}).
\end{enumerate}  

\paragraph{Abstraction of semiflow} 

An established basic concept in the study of evolution problems with unique solutions (corresponding to given initial data) is that of a semiflow. 
A semiflow on a metric space $\SS$ is a family of continuous mappings $S(t): \SS\to\SS, \ t\geq 0,$ for which the semigroup properties
\begin{displaymath}
S(0)x = x , \quad S(t+s)x= S(t)S(s)x \quad \quad (x\in\SS, \ s,t\geq0)
\end{displaymath} 
hold; 
$t\mapsto S(t)x$ is identified with the unique solution $u:[0, +\infty) \to \SS$ with initial value $u(0) = x$.
\begin{footnote}
{This definition of semiflow corresponds to the one given in \cite{ball2000continuity} where the continuity of the solutions is not assumed in the definition but regarded as additional assumption.}
\end{footnote} 

Diverse methods are known to abstract dynamical systems, allowing for nonuniqueness of solutions.

One method is to define $S(t)$ as a set-valued mapping and to interpret $S(\cdot)x$ as the collection of all the solutions $u: [0, +\infty) \to \SS$ with initial value $u(0) = x$ (multivalued semiflow, e.g. \cite{babin1986maximal, babin1995attractor, melnik1998attractors}).
Another method is to consider a semiflow $S(\cdot)$ defined on the space of maps $u: [0, +\infty) \to \SS$ (not on the phase space $\SS$), by $S(t)u = u^t$, where $u^t(\cdot):= u(\cdot + t)$ \cite{sell1973differential}.
A third method \cite{ball2000continuity} is to take the solutions themselves as objects of study and generalize the concept of semiflow on the basis that a semiflow $S(\cdot)$ can be equivalently defined as the family of maps $u:[0, +\infty) \to \SS, \ u(t) = S(t)u(0)$. 

\begin{definition}\label{def: generalized semiflow}(J. M. Ball \cite{ball2000continuity}) A \textbf{generalized semiflow} $\UU$ on $\SS$ is a family of maps $u: [0, +\infty) \to \SS$ (called \textbf{solutions}) satisfying the hypotheses 
\begin{enumerate}[label={(G\arabic*)}]
\item  Existence: For each $u_0\in\SS$ there exists at least one $u\in\UU$ with $u(0) = u_0$. \label{itm: G1} 
\item Translates of solutions are solutions: If $u\in\UU$ and $\tau\geq 0$, then the map $u^\tau(t) := u(t+\tau), \ t\in[0, +\infty),$ belongs to $\UU$. \label{itm: G2}
\item Concatenation: If $u, v \in \UU, \ \bar{t}\geq 0$, with $v(0) = u(\bar{t})$, then $w\in\UU$, where $w(t) := u(t)$ for $0\leq t\leq \bar{t}$ and $w(t) := v(t-\bar{t})$ for $t > \bar{t}$.  \label{itm: G3}
\item Upper-semicontinuity with respect to initial data: If $u_j\in\UU$ with $u_j(0) \stackrel{\SS}{\to} x$, then there exist a subsequence $u_{j_k}$ of $u_j$ and $u\in\UU$ with $u(0) = x$ such that $u_{j_k}(t) \stackrel{\SS}{\to} u(t)$ for each $t\geq 0$. \label{itm: G4}
\end{enumerate}
\end{definition}
If in addition the hypothesis \ref{itm: S} is satisfied, then $\UU$ is a \textbf{semiflow}: 
\begin{enumerate}[label = {(S)}]
\item For each $u_0\in\SS$ there is exactly one $u\in\UU$ with $u(0) = u_0$. \label{itm: S}
\end{enumerate}


\medskip
\paragraph{Generalized $\Lambda$-semiflow}

The concept of generalized $\Lambda$-semiflow introduced in this paper is an abstraction of semiflows with non-periodic solutions, where nonuniqueness phenomena may occur (see Section \ref{subsec: 1}).
\begin{footnote} 
{`nonperiodic' means that there is no periodic nonconstant solution 

The $\Lambda$ in `generalized $\Lambda$-semiflow' is no parameter; it reminds of the presence of a Lyapunov or Lyapunov-like function which is a typical example for a situation in which periodic nonconstant solutions are excluded.}
\end{footnote} 

As in \cite{ball2000continuity}, a semiflow is defined as a family of maps $u: [0, +\infty) \to \SS$ satisfying the hypotheses \ref{itm: G1} - \ref{itm: G4} and \ref{itm: S}. 
The solutions themselves are taken as objects of study. However, in the study of minimal solutions, the dynamics between solutions sharing the same range are of interest, rather than the limit behaviour \ref{itm: G4} of solutions possibly having different ranges. The definition of generalized $\Lambda$-semiflow mirrors this aspect.  

A \textbf{generalized $\Lambda$-semiflow} on $\SS$ is defined to be a nonempty family of maps $u: [0, +\infty) \to \SS$ (called \textbf{solutions}) satisfying hypotheses relating to
\begin{enumerate}[label = {($\Lambda$\arabic*)}]
\item time translation: time translates of solutions are solutions, 
\item concatenation: the concatenation of two solutions yield a solution, 
\item non-periodicity: if $u(s) = u(t)$, then $u$ constant in $[s, t]$, 
\item extension: ``if the range of a solution can be passed through in finite time without becoming eventually constant, it may be extended",
\item `local' character: solutions are characterized by their behaviour in finite time intervals
\end{enumerate}
(see Definition \ref{def: sigma semiflow}). We will focus on generalized $\Lambda$-semiflows with sequentially continuous solutions.

\medskip
We note that 
\begin{itemize}
\item there is a connection between the concept of generalized $\Lambda$-semiflow and Ball's concept of generalized semiflow (see next page, \textit{minimal solutions to generalized semiflows});
\item the hypotheses constituting a generalized $\Lambda$-semiflow are mild enough to allow of applications of the theory of minimal solutions to cases beyond the scope of generalized semiflows (see next page, \textit{minimal solutions to gradient flows in metric spaces}).
\end{itemize}

\medskip\noindent
\paragraph{Minimal solutions to generalized semiflows}

Every generalized semiflow with non-periodic continuous solutions is a generalized $\Lambda$-semiflow and all our results \ref{itm: intro 1}, \ref{itm: intro 2}, \ref{itm: intro 3} relating to existence, uniqueness and characteristics of minimal solutions are applicable (see Theorem \ref{thm: ms gs} and Theorem \ref{thm: 3 neu}).

\medskip\noindent
\paragraph{Minimal solutions to gradient flows in metric spaces}

A gradient flow on a metric space $\SS$ \cite{AmbrosioGigliSavare05}, generated by a functional $\phi: \SS\to (-\infty, +\infty]$ and its strong upper gradient $g: \SS \to [0, +\infty]$, is described by the energy dissipation inequality
\begin{displaymath}
\phi(u(s)) - \phi(u(t)) \ \geq \ \frac{1}{2}\int_s^t{g^2(u(r)) \ dr} \ + \ \frac{1}{2}\int_s^t{|u'|^2(r) \ dr}
\end{displaymath} 
for all $0\leq s\leq t$; the solutions $u: [0, +\infty) \to \SS$ are referred to as curves of maximal slope for $\phi$ w.r.t. $g$ (see definitions in Section \ref{subsec: gf1}). 

\medskip
If $\phi$ and $g$ are lower semicontinuous and $\phi$ is quadratically bounded from below, then the corresponding gradient flow is a generalized $\Lambda$-semiflow and all our results \ref{itm: intro 1}, \ref{itm: intro 2}, \ref{itm: intro 3} relating to existence, uniqueness and characteristics of minimal solutions are applicable (see Theorem \ref{thm: gf as gls} and Theorem \ref{thm: 59}). 

\medskip
It is true that our assumptions do not suffice to guarantee a priori the existence of curves of maximal slope but if solutions exist, our concept of minimal solutions can be applied. 

\medskip
Further, a special quality of minimal solutions to a gradient flow can be proved: a curve of maximal slope is a minimal solution if and only if it crosses the $0$ level set of the strong upper gradient $g$ in an $\LL^1$-negligible set of times (before it possibly becomes eventually constant) (see Proposition \ref{prop: cross critical set}).  

\medskip
We note that, under our assumptions, the gradient flow is a generalized $\Lambda$-semiflow but does not fit into the concept of generalized semiflow; additional assumptions such as the relative compactness of the sublevels of $\phi$ (which entails that $\phi$ is bounded from below by a constant) and a conditional continuity assumption would be needed in order to prove the upper-semicontinuity hypothesis \ref{itm: G4} in the Definition \ref{def: generalized semiflow} of generalized semiflow (cf. \cite{rossi2011global} where the theory of generalized semiflow \cite{ball2000continuity} is used to prove the existence of the global attractor for a gradient flow). 

\medskip\noindent
\paragraph{Further results}

If there exists a function $\Psi: \SS\to\mathbb{R}$ which decreases along solution curves, a characterization of minimal solutions in terms of $\Psi$ is also provided (see Proposition \ref{prop: 1}). Time translation and concatenation of minimal solutions yield minimal solutions (see Proposition \ref{prop: ms are gls}). 

\medskip\noindent
\paragraph{Plan of the paper} 

In Section \ref{sec: 1}, we give the precise definitions of generalized $\Lambda$-semiflow, explaining our hypotheses and the link to the classical notion of semiflow, and of minimal solutions, and we prove results relating to existence, uniqueness and characteristics of minimal solutions, within the abstract framework of generalized $\Lambda$-semiflow. 
In Sections \ref{sec: generalized semiflows} and \ref{sec: gf}, we apply our concept of minimal solutions to generalized semiflows (Section \ref{sec: generalized semiflows}) and to gradient flows in metric spaces (Section \ref{sec: gf}).

\section{Notation}

The phase space $\SS$ is endowed with a Hausdorff topology and $x_j\stackrel{\SS}{\to} x$ denotes the corresponding convergence of sequences. 

\medskip\noindent
The range of a curve $u: [0, +\infty) \to \SS$ is denoted by 
\begin{displaymath}
\RR[u]:= u([0, +\infty)), 
\end{displaymath}
its union with what is usually referred to as $\omega$-limit set in the literature by 
\begin{displaymath}
\overline{\RR[u]} := \RR[u]\cup\{w_\star\in\UU \ | \ \exists t_n \to +\infty, \ u(t_n) \stackrel{\SS}{\to} w_\star\}, 
\end{displaymath}  
and we set 
\begin{displaymath}
T_\star(u) := \inf \{s\geq 0 \ | \ u(t) = u(s) \text{ for all } t\geq s\} \in [0, +\infty].
\end{displaymath}

We say that the limit $\lim_{t\uparrow \nu}u(t)=:w_\star\in\SS$ exists for $\nu\in(0, +\infty]$ iff $u(t_n)\stackrel{\SS}{\to}w_\star$ for every sequence of times $t_n\uparrow \nu$. 

\section{Generalized $\Lambda$-semiflow, minimal solutions}\label{sec: 1}
We develop an abstract framework for our analysis of evolution problems for which there may be more than one solution sharing the same range. In this context, we define \textit{generalized $\Lambda$-semiflows}, generalizing the notion of semiflows with Lyapunov function to a certain extent adapted for our considerations.   

\subsection{Definition of generalized $\Lambda$-semiflow}\label{subsec: 1}

\begin{definition}\label{def: sigma semiflow} 
A \textbf{generalized $\Lambda$-semiflow} $\UU$ on $\SS$ is a nonempty family of maps $u: [0, +\infty) \to \SS$  satisfying the hypotheses:
\begin{enumerate}[label={(H\arabic*)}]
\item For every $u\in\UU$ and $\tau\geq 0$, the map $u^{\tau}(t) := u(t + \tau), \ t\in[0, +\infty)$, belongs to $\UU$.\label{itm: 1}
\item Whenever $u, v \in \UU$ with $v(0) = u(\bar{t})$ for some $\bar{t} \geq 0$, then the map $w: [0, +\infty) \to \SS$, defined by $w(t):=u(t)$ if $t\leq\bar{t}$ and $w(t):=v(t-\bar{t})$ if $t > \bar{t}$, belongs to $\UU$. \label{itm: 2}
\item Whenever $u, v \in\UU$ with $v([s, t])\subset \RR[u]$ for some $t > s \geq 0$, then for every $l_1, l_2 \in [s,t]$ the following holds: if $v(l_1)=u(r_1)$ and $v(l_2)=u(r_2)$ with $u(r_1)\neq u(r_2)$ and $r_1 < r_2$, then $l_1 < l_2$. \label{itm: 3}
\item If $u\in\UU$ and there exists a map $w: [0, \theta) \to \SS$ with $\theta < +\infty$ such that $w|_{[0, T]}$ can be extended to a map in $\UU$ for every $T\in[0,\theta)$, and $w([0, \theta)) = \RR[u]$, 
then the limit $\lim_{t\uparrow +\infty}u(t)=:w_\star \in \SS$ exists 
and the map $\bar{w}: [0, +\infty) \to \SS$, defined by 
\begin{displaymath}
\bar{w}(t) := 
\begin{cases}
 w(t) &\text{if } t < \theta \\ w_\star &\text{if } t\geq \theta 
\end{cases}
\end{displaymath} 
belongs to $\UU$. \label{itm: 4} 
\item If a map $w: [0, +\infty) \to \SS$ has the property that $w|_{[0, T]}$ can be extended to a map in $\UU$ for every $T >0$, then $w\in\UU$. \label{itm: 5} 
\end{enumerate} 
The elements $u\in\UU$ are referred to as \textbf{solutions}.  
\end{definition}
The hypotheses \ref{itm: 1} and \ref{itm: 2} say that time translates of solutions are solutions and that the concatenation of two solutions yield a solution. It appears that both axioms arise quite naturally in generalizations of semiflow theory including nonuniqueness phenomena (cf. \cite{ball2000continuity} and Definition \ref{def: generalized semiflow}). 

The meaning of hypothesis \ref{itm: 3} is that there is only one proper direction to run through the range of a solution. Typical examples (as given in this paper) are situations involving an energy decreasing along solution curves and which is constant along a solution only if the solution is constant. As a consequence of \ref{itm: 3} (by choosing $u=v$) we also obtain
\begin{equation}\label{eq: 3}
u(s) = u(t) \quad \text{if and only if} \quad u(r) = u(s)  \text{ for all } r\in [s, t]
\end{equation}    
for all $u\in\UU$ and $0\leq s < t < +\infty$. 
\begin{remark}\label{rem: 3}
Hypothesis \ref{itm: 3} may be replaced by (\ref{eq: 3}) in Definition \ref{def: sigma semiflow}. 

Indeed, if the translation and concatenation hypotheses \ref{itm: 1} and \ref{itm: 2} hold good and all $u\in\UU$ satisfy (\ref{eq: 3}), then \ref{itm: 3} follows by a contradiction argument: suppose that there exist $u, v \in \UU$ and $r_1 < r_2, \ l_2 < l_1$ such that $v(l_1) = u(r_1) \neq u(r_2) = v(l_2)$, and construct the map $w: [0, +\infty) \to \SS$, 
\begin{displaymath}
w(t) := 
\begin{cases}
u(t) &\text{ if } t\leq r_2 \\
v(t + l_2 - r_2) &\text{ if } t > r_2 
\end{cases}
\end{displaymath}
which belongs to $\UU$ by \ref{itm: 1} and \ref{itm: 2}. Then $w(r_1) = w(r_2 + l_1 - l_2)$, but $w(r_2) \neq w(r_1)$ and $r_1 < r_2 < r_2 + l_1 - l_2$, in contradiction to (\ref{eq: 3}).  
 
Conversely, \ref{itm: 3} implies (\ref{eq: 3}), as already mentioned.   
\end{remark}

The extension property expressed in hypothesis \ref{itm: 4} excludes degenerate cases corresponding to the rate at which the range of a solution is described. We give an example of such degenerate case which should be excluded. 
\begin{example}
Let $\SS = \mathbb{R}$ and $\UU$ be the family of all continuous maps $u: [0, +\infty) \to \mathbb{R}$ satisfying $u(0) > 0$ and 
\begin{displaymath}
u'(t) = u(t)^2 \text{ if } t\in(S_i, T_i), \ i\in\mathbb{N} , \quad u'(t) = u(t) \text{ if } t\notin \bigcup_{i\in\mathbb{N}}^{}{[S_i, T_i]}
\end{displaymath} 
for some 
$S_{i+1} \geq T_i \geq S_i \geq 0$ with $\{S_i, \ T_i \ | \ i\in\mathbb{N}\}\cap [0, T]$ finite set for every $T > 0$. Then obviously $\UU$ is nonempty and the hypotheses \ref{itm: 1} - \ref{itm: 3} and \ref{itm: 5} hold good but choosing $w: [0, 1) \to \mathbb{R}, \ w(t) := \frac{1}{1-t}$, we see that $\UU$ does not satisfy \ref{itm: 4}.  
\end{example}

Hypothesis \ref{itm: 5} reflects the `local character' of $\UU$. The following example provides a classic case of a non-local characterization being tantamount to some arbitrariness which we intend to exclude by hypothesis \ref{itm: 5}.  
\begin{example}
Let $\SS = \mathbb{R}^2$ and $\UU$ be the family of all continuous maps $u: [0, +\infty) \to \mathbb{R}^2, \ u(t) = (u_1(t), u_2(t))$ such that $u_1(0) > 0$, $u_2$ is strictly increasing and 
\begin{displaymath}
u_1'(t) = u_1(t) \text{ for all } t > 0, \quad \exists T \geq 0: \ u_2(t) = u_2(T) + t - T \text{ for all } t > T. 
\end{displaymath}
Then it is easy to check that $\UU$ is nonempty and satisfies \ref{itm: 1} - \ref{itm: 4} but $\UU$ does not satisfy \ref{itm: 5}. In this case, any strictly increasing continuous map $g: [0, +\infty) \to \mathbb{R}$ which does not eventually become linear will yield a counterexample to \ref{itm: 5}. 
\end{example}

Let us explain to what extent our notion of generalized $\Lambda$-semiflow is an abstraction of the classical semiflow theory.

We observe that any semiflow $\UU$ whose members satisfy (\ref{eq: 3}) is a generalized $\Lambda$-semiflow. This follows from the time translation and uniqueness property (corresponding to given initial data) of a semiflow (\ref{itm: G2} and \ref{itm: S}). It is straightforward to check \ref{itm: 1} - \ref{itm: 3} in this case. Choosing $u\in\UU$ and $w: [0, \theta) \to \SS$ as in \ref{itm: 4}, we obtain $u|_{[0, \theta)} = w|_{[0, \theta)}$ by \ref{itm: S} (since $u(0) = w(0)$ by \ref{itm: 3}) so that $w([0, \theta)) = \RR[u]$ and (\ref{eq: 3}) yield $u$ constant in $[\theta, +\infty)$. This proves \ref{itm: 4}. Finally, \ref{itm: 5} follows from \ref{itm: S}.   

On the other hand, if a member $u: [0, +\infty) \to \SS$ of a semiflow does not satisfy (\ref{eq: 3}), then there is necessarily a time $T > 0$ such that $u$ is periodic and nonconstant on $[T, +\infty)$. Indeed, if there exist $0\leq s < \bar{r} < t < +\infty$ such that $u(s) = u(t)$ but $u(\bar{r})\neq u(s)$, then \ref{itm: G2} and \ref{itm: S} imply $u(r + s) = u(r + t)$ for all $r\geq 0$ which is equivalent to 
\begin{displaymath}
u(r + t - s) = u(r) \text{ for all } r\geq s, \quad u(\bar{r} + j(t-s))\neq u(s + j(t-s)) \text{ for all } j\in\mathbb{N}. 
\end{displaymath}
The hypotheses \ref{itm: 3} and \ref{itm: 4} do not hold good in this case. We illustrate this situation excluded in Definition \ref{def: sigma semiflow} with an example.   
\begin{example}
Let $\SS = \mathbb{R}^2$ and consider 
\begin{displaymath}
\UU := \{u: [0, +\infty) \to \mathbb{R}^2 \ | \ u(\cdot) \equiv r(\cos(\cdot + \tau), \ \sin(\cdot + \tau)),  \quad \tau\in [0, 2\pi), \ r\geq 0 \}.
\end{displaymath}
Clearly, $\UU$ is a semiflow on $\mathbb{R}^2$ but the hypotheses \ref{itm: 3} and \ref{itm: 4} are not satisfied and $\UU$ is not a generalized $\Lambda$-semiflow. 
\end{example}  

A connection between generalized $\Lambda$-semiflows and the established theory of generalized semiflows introduced by Ball \cite{ball2000continuity} is made in Section \ref{sec: generalized semiflows}. We will see that any generalized semiflow whose members satisfy (\ref{eq: 3}) satisfies the hypotheses \ref{itm: 1} - \ref{itm: 3}, \ref{itm: 5} and a slightly weaker variation on \ref{itm: 4}. 
If, in addition, all the solutions are continuous, then it satisfies \ref{itm: 4}, too. 

We note that a generalized $\Lambda$-semiflow $\UU$ on $\SS$ is nonempty but there may be initial data $x\in\SS$ for which there exists no $u\in\UU$ with $u(0) = z$. Also, nothing is said about the behaviour of a sequence $(u_j)$ in $\UU$ with converging initial data $u_j(0)$.

Gradient flows in metric spaces fit very well in the concept of generalized $\Lambda$-semiflows. This aspect is examined in Section \ref{sec: gf}.

\subsection{A partial order between solutions}

Let a generalized $\Lambda$-semiflow $\UU$ on $\SS$ be given. We introduce a particular class of solutions (which we call \textit{minimal solutions}), arising naturally from a partial order in $\UU$:  

\begin{definition}\label{def: minimal solution}
  If $u,v\in \UU$ we say that 
  $u\succ v$ if $\RR[v]\subset  \overline{\RR[u]}
 $ 
 and there exists an increasing $1$-Lipschitz map $\sfz
 :[0,+\infty)\to [0,+\infty)$ with $\sfz(0)=0$ such that
 \begin{equation}
   \label{eq: partial order}
  u(t)=v(\sfz(t))\quad\text{for every }t\ge 0.
\end{equation}
An element $u\in \UU$ is \textbf{minimal} if for every $v\in \UU $,  $u\succ v$ yields $u=v$; and $\UU_{\text{min}}$ denotes the
collection of all the minimal solutions.
\end{definition}

Let us make a few comments on Definition \ref{def: minimal solution}. 
\begin{enumerate}[label={ ({\roman*}) }]
\item A map $\sfz: [0, +\infty) \to [0, +\infty)$ is increasing and $1$-Lipschitz if and only if 
\begin{equation}
0\le \sfz(t)-\sfz(s)\le t-s \quad \text{for every }0\le s\le t.
\end{equation}
\item It is not difficult to see that $\succ$ forms indeed a partial order in $\UU$ ([\cite{sf2017}, Remark 3.3]).
\item Condition (\ref{eq: partial order}) implies the range inclusion $\RR[u]\subset \RR[v]$.
\item The condition on the range $\RR[v]\subset \overline{\RR[u]}$ gives control over the long-time behaviour of a possible minimal solution. Its effect as a selection criterion is illustrated in [\cite{sf2017}, Remark 3.2] with a 1-dimensional example of a gradient flow.    
\end{enumerate}

It is not clear a priori if minimal solutions exist at all. Some kind of compactness property of $\UU$ appears necessary in order to guarantee the existence of minimal solutions. Let us consider our main tools concerning compactness for the existence proof given in section \ref{sec: existence}. \\

We introduce the class of truncated solutions 
\begin{displaymath}
\TT[\UU] := \{v: [0, +\infty) \to \SS \ | \ v(t) = u(t\wedge T) \quad \text{for some } u\in\UU, \ T\in [0, +\infty]\}
\end{displaymath}
and we define the map $\rho: \TT[\UU] \to [0, +\infty]$ as 
\begin{equation}\label{eq: rho}
\rho(v) := \inf\{s\geq 0 \ | \ v(t) = v(s) \quad \text{for every } t\geq s\}, \ v\in\TT[\UU].  
\end{equation} 

The following \textbf{compactness hypothesis} will turn out to be appropriate for our purposes:   
\begin{enumerate}[label = {(C)}]
\item If a sequence $v_n\in\TT[\UU], \ n\in\mathbb{N},$ satisfies $\sup_n{\rho(v_n)} < +\infty$ and $\RR[v_n] = \RR[v_1]$ for all $n \in\mathbb{N}$, then there exists $v\in\TT[\UU]$ and a subsequence $n_k\uparrow +\infty$ such that 
\begin{displaymath}
v_{n_k}(t)\stackrel{\SS}{\to} v(t) \quad \text{for all } t\in[0, +\infty), \quad \RR[v] = \RR[v_1].
\end{displaymath} \label{itm: C}
\end{enumerate} 
We note that in the above situation it holds that
\begin{equation}\label{eq: rho lsc}
\rho(v) \ \leq \ \liminf_{k\to +\infty} \rho(v_{n_k})
\end{equation}
since $\rho$ is lower semicontinuous with respect to pointwise convergence. \\

Now, we have all the ingredients to prove the existence of minimal solutions. Our construction will be based on a step-by-step procedure of truncating a given trajectory and each time minimizing $\rho$ with respect to the truncated range.  

\subsection{Existence and characteristics of minimal solutions}\label{sec: existence}

Existence and uniqueness of minimal solutions corresponding to given ranges is proved under the additional compactness hypothesis \ref{itm: C}.

It is shown that among solutions sharing the same range, the minimal solution induces all the other ones by time reparametrization (\ref{eq: partial order}) and it reaches any point in the range in minimal time. 

\medskip\noindent
\textbf{Definition of $\UU[\RR]$}

For a generalized $\Lambda$-semiflow $\UU$ and the range $\RR = \RR[y] \subset \SS$ of a solution $y\in\UU$, we define $\UU[\RR]$ as the collection of all the solutions $w\in\UU$ with $\RR\subset\RR[w]\subset \overline{\RR} := \RR\cup\{w_\star\in\SS \ | \ \exists t_n\to +\infty, \ y(t_n)\stackrel{\SS}{\to} w_\star\}$ and 
\begin{equation}\label{eq: U[R]}
w([0, \theta)) = \RR \quad \text{and} \quad  w([\theta, +\infty)) \subset \overline{\RR}\setminus \RR \quad \text{for some } \theta \in (0, +\infty]. 
\end{equation}
We note that the set $\overline{\RR}$ is indeed independent of the choice $y\in\UU$ with $\RR[y] = \RR$: 
\begin{lemma}
Whenever $y, \tilde{y}\in\UU, \ \RR[y] = \RR[\tilde{y}]$ and $w_\star\in\SS$, it holds that
\begin{equation}\label{eq: overline R}
\exists t_n\to +\infty, \ y(t_n)\stackrel{\SS}{\to} w_\star \quad \text{if and only if} \quad \exists s_n\to +\infty, \ \tilde{y}(s_n)\stackrel{\SS}{\to} w_\star, 
\end{equation}
i.e. it holds that $\overline{\RR[y]} = \overline \RR = \overline{\RR[\tilde{y}]}$.
\end{lemma}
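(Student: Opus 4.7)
The plan is to exploit hypothesis \ref{itm: 3}, which encodes that every solution sharing the range $\RR=\RR[y]$ traverses it in a consistent order. By the symmetry of the statement in $y,\tilde y$ it suffices to prove the implication from left to right: given $t_n\to+\infty$ with $y(t_n)\stackrel{\SS}{\to}w_\star$, to produce $s_n\to+\infty$ with $\tilde y(s_n)\stackrel{\SS}{\to}w_\star$. I would split the argument according to whether $T_\star(y)$ is finite.

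As a preliminary step, I would show that $T_\star(y)<+\infty$ iff $T_\star(\tilde y)<+\infty$, with the same eventual value $y_\star\in\RR$. Indeed, if $y$ stabilises at $y_\star$ after time $T_\star(y)$, then $y_\star\in\RR=\RR[\tilde y]$, so $\tilde y(\tilde r)=y_\star$ for some $\tilde r\ge 0$; assuming for contradiction that $\tilde y(t)\neq y_\star$ for some $t>\tilde r$, I would pick $r_1$ with $y(r_1)=\tilde y(t)$ (necessarily $r_1<T_\star(y)$) and $r_2\ge T_\star(y)$ with $y(r_2)=y_\star$, and apply \ref{itm: 3} to $u=y,\,v=\tilde y$ with $l_1=t,\,l_2=\tilde r$ to obtain $t<\tilde r$, a contradiction. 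In this eventually constant case, Hausdorffness of $\SS$ forces $w_\star=y_\star$, and any sequence $s_n\to+\infty$ with $s_n\ge\tilde r$ satisfies $\tilde y(s_n)=y_\star=w_\star$.

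In the remaining case $T_\star(y)=T_\star(\tilde y)=+\infty$, for each $n$ I would set $A_n:=\{s\ge 0:\tilde y(s)=y(t_n)\}$; this is a nonempty interval in $[0,+\infty)$ by (\ref{eq: 3}), with finite supremum $\tau_n$ (otherwise (\ref{eq: 3}) would force $\tilde y$ to be eventually constant at $y(t_n)$). The heart of the proof is to show $\tau_n\to+\infty$. Assuming for contradiction that $\tau_n\le M$ along a subsequence, I would fix any $\bar t>M$, select $\bar r$ with $y(\bar r)=\tilde y(\bar t)$, and pick $\sigma_n\in A_n$ (so $\sigma_n\le M<\bar t$ and $\tilde y(\sigma_n)=y(t_n)\neq\tilde y(\bar t)=y(\bar r)$); a second application of \ref{itm: 3}, with the roles of $(r_1,r_2)$ and $(l_1,l_2)$ interchanged to obtain the reverse implication ``$l_1<l_2$ and $v(l_1)\neq v(l_2)\Rightarrow r_1<r_2$'', then yields $t_n<\bar r$ for every $n$ in the subsequence, contradicting $t_n\to+\infty$. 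With $\tau_n\to+\infty$ in hand, picking $s_n\in A_n\cap[\max(0,\tau_n-1),\tau_n)$ produces the required sequence, since $\tilde y(s_n)=y(t_n)\stackrel{\SS}{\to}w_\star$.

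The main obstacle is the extraction of this reverse implication from \ref{itm: 3}, which is asymmetric in the indices; because the lemma is stated under the bare hypotheses of generalized $\Lambda$-semiflow and no continuity of solutions is assumed, the preimage interval $A_n$ need not be closed, so the argument must work with an interior representative $\sigma_n$ rather than with $\tau_n$ itself.
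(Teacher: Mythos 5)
Your argument is correct and runs along essentially the same lines as the paper's proof: both select $\tilde y$-preimage times of the $y(t_n)$, use \ref{itm: 3} to order them, and show that they must tend to $+\infty$ unless the solutions are eventually constant (a case you, unlike the paper, treat explicitly via your preliminary step that $T_\star(y)<+\infty$ iff $T_\star(\tilde y)<+\infty$). The only quibble is that when $A_n=\{\tau_n\}$ your half-open window $[\max(0,\tau_n-1),\tau_n)$ is empty, so you should simply allow $s_n=\tau_n$ in that degenerate case; this changes nothing.
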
 
\begin{proof}
If $t_n\to +\infty, \ y(t_n) \stackrel{\SS}{\to} w_\star$, then there is a sequence of times $(s_n)$ with $\tilde{y}(s_n) = y(t_n)$, and by \ref{itm: 3}, we may assume that $(s_n)$ is increasing. Let $S:= \sup_n s_n$. If $S = +\infty$ or $T_\star(y) < +\infty$, nothing remains to be shown. If $S < +\infty$ and $T_\star(y)=+\infty$, then we obtain $\tilde{y}([0, S)) = \RR[y] = \RR[\tilde{y}]$ by \ref{itm: 3}, and thus by (\ref{eq: 3}) there exists $\delta > 0$ such that $\tilde{y}$ is constant in $(S-\delta, +\infty]$, in contradiction to $T_\star(y) = +\infty$. This proves (\ref{eq: overline R}).    
\end{proof}

Let us take a close look at the case of finite $\theta$ in (\ref{eq: U[R]}). If there exists a solution $w\in\UU[\RR]$ with $w([0, \theta)) = \RR$ and $\theta < +\infty$, we may apply \ref{itm: 4} and obtain that the limit $\lim_{t\uparrow +\infty} y(t) =: w_\star \in \SS$ is well-defined and that $w(t) = w_\star$ for all $t\geq \theta$. We notice that $\overline \RR$ then takes the form $\overline \RR = \RR\cup\{w_\star\}$. In this case, $\overline \RR = \RR[w] = \overline{\overline \RR}$ and $\UU[\overline \RR] \subset \UU[\RR]$.  

The following observation which is a direct consequence of Definition \ref{def: minimal solution} and (\ref{eq: 3}) may be seen as motivation behind considering $\UU[\RR]$.
\begin{lemma}
For $y, w\in\UU$, the implication 
\begin{equation}\label{eq: U[R] motiv}
y\succ w \quad \Rightarrow \quad w\in\UU[\RR[y]] 
\end{equation}
holds good. 
\end{lemma}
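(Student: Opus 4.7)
The conclusion $w\in\UU[\RR[y]]$ requires three ingredients: the inclusions $\RR[y]\subset \RR[w]$ and $\RR[w]\subset \overline{\RR[y]}$, and the existence of a splitting time $\theta\in (0,+\infty]$ realizing condition (\ref{eq: U[R]}). The second inclusion is part of the definition of $y\succ w$, while the first is immediate from the factorization $y=w\circ \sfz$, since each $y(t)=w(\sfz(t))$ lies in $\RR[w]$. Only the splitting time requires work.

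I set $\theta_0:=\sup_{t\geq 0}\sfz(t)\in [0,+\infty]$. In the degenerate case $\theta_0=0$, one has $\sfz\equiv 0$, hence $y\equiv w(0)$ and $\overline{\RR[y]}=\{w(0)\}$, so the hypothesis $\RR[w]\subset \overline{\RR[y]}$ forces $w\equiv w(0)$, and $\theta:=+\infty$ trivially realizes (\ref{eq: U[R]}). Otherwise $\theta_0>0$, and since $\sfz$ is continuous and increasing with $\sfz(0)=0$, the intermediate value theorem yields $\sfz([0,+\infty))\supset [0,\theta_0)$, so that
\begin{equation*}
w([0,\theta_0))\subset \RR[y]\subset w([0,\theta_0]).
\end{equation*}

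The natural candidate is $\theta:=\theta_0$. If no $t_0\geq \theta_0$ satisfies $w(t_0)\in \RR[y]$, then in particular $w(\theta_0)\notin \RR[y]$, which together with $\RR[y]\subset w([0,\theta_0])$ forces $\RR[y]=w([0,\theta_0))$; combined with $\RR[w]\subset \overline{\RR[y]}$, one gets $w([\theta_0,+\infty))\subset \overline{\RR[y]}\setminus \RR[y]$, and $\theta=\theta_0$ does the job. Otherwise, pick $t_0\geq \theta_0$ and $s_0\geq 0$ with $w(t_0)=w(\sfz(s_0))$. Since $\sfz(s_0)\leq \theta_0\leq t_0$, equation (\ref{eq: 3}) forces $w$ to be constant equal to $w(\sfz(s_0))$ on $[\sfz(s_0),t_0]$, and composing with $\sfz$ yields $y(s)=y(s_0)$ for every $s\geq s_0$, so $y$ is eventually constant. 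This collapses $\overline{\RR[y]}$ onto $\RR[y]$, and a second application of (\ref{eq: 3}), together with $\RR[w]\subset \overline{\RR[y]}=\RR[y]$, rules out any escape of $w$ from the value $w(\sfz(s_0))$ after time $t_0$; hence $w$ is itself eventually constant, $\RR[w]=\RR[y]$, and the replacement choice $\theta:=+\infty$ realizes (\ref{eq: U[R]}) trivially.

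The main obstacle is this second (revisit) scenario. The mechanism is a double use of (\ref{eq: 3}): the first occurrence of equal values of $w$ forces a stretch of constancy of $w$, which the factorization $y=w\circ\sfz$ immediately transports to $y$; the resulting collapse $\overline{\RR[y]}=\RR[y]$ then feeds back into (\ref{eq: 3}) a second time to rule out any further escape of $w$, so that the choice $\theta=+\infty$ absorbs the remainder of the trajectory.
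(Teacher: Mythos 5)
Your proof is correct and follows essentially the same route as the paper: both take $\theta_0=\sup_{t\ge 0}\sfz(t)$ as the candidate splitting time, use the continuity of $\sfz$ to get $w([0,\theta_0))\subset\RR[y]\subset w([0,\theta_0])$, and invoke (\ref{eq: 3}) to show that any revisit of $\RR[y]$ by $w$ at or after $\theta_0$ forces $y$ to be eventually constant, whence $\overline{\RR[y]}=\RR[y]=\RR[w]$ and $\theta=+\infty$ works. The only differences are cosmetic bookkeeping of the cases (the paper splits on whether $\theta_0$ is infinite or attained, you split on $\theta_0=0$ and on revisit vs.\ no revisit), plus your harmless extra observation that $w$ itself is eventually constant.
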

\begin{proof}
If $y\succ w$, then by definition, $\RR[w]\subset \overline{\RR[y]}$ and there exists an increasing $1$-Lipschitz map $\sfz: [0, +\infty) \to [0, +\infty)$ with $\sfz(0) = 0$ such that $y(t) = w(\sfz(t))$ for all $t\geq 0$. Choose $\theta := \sup_{t\geq0} \sfz(t) \in [0, +\infty]$. If $s\in[0, \theta)$, then there exists $t\geq0$ such that $\sfz(t) = s$ and thus $w(s) \in \RR[y]$. If $\theta = +\infty$, then $\RR[w] = \RR[y]$. The same holds if $\theta < +\infty, \ \sfz(\bar{t}) = \theta$ for some $\bar{t}\geq0$. Finally, we consider the case $\theta < +\infty, \ \sfz(t) < \theta$ for all $t\geq0$. It holds that $w([0, \theta)) = \RR[y]$ and $w([\theta, +\infty) \subset \overline{\RR[y]}$. If $w(s) \in \RR[y]$ for some $s\geq \theta$, then there exists $\tilde{s}\in[0, \theta)$ such that $w(s) = w(\tilde{s})$, and by (\ref{eq: 3}), $w$ is constant in $[\tilde{s}, s]$, hence $T_\star(y) < +\infty$ and $\RR[w] = \RR[y]$. The proof of (\ref{eq: U[R] motiv}) is complete.     
\end{proof}  

Now, our theorem reads as follows. 

\begin{theorem}\label{thm: minimal solution}
Let $\UU$ be a generalized $\Lambda$-semiflow on $\SS$ satisfying the compactness hypothesis \ref{itm: C}. Suppose that every solution $u\in\UU$ is sequentially continuous, i.e.
\begin{equation}\label{eq: cont h}
u(t_j) \stackrel{\SS}{\to} u(t) \quad \text{whenever } t_j \to t, \ t_j, t \in [0, +\infty).
\end{equation}  
Then the following statements hold good: 
\begin{enumerate}[label = {(\arabic*)}]
\item For every $\RR = \RR[y] \subset \SS$ which is the range of a solution $y\in\UU$ there exists a unique minimal solution $u\in\UU[\RR]\cap\UU_{\text{min}}$.

Moreover, if $v\in\UU[\RR]$, then $v\succ u$. \label{itm: T1} 
\item Every minimal solution $u\in\UU_{\text{min}}$ is injective in $[0, T_\star(u))$.\label{itm: T2} 
\item Whenever $u\in\UU_{\text{min}}, \ v\in\UU$ with $u\in\UU[\RR[v]]$ and $u(t_0) = v(t_1)$ for some $t_0, t_1\in[0, +\infty)$, then $t_0\wedge T_\star(u) \leq t_1$.  \label{itm: T3} 
\item Whenever $u\in\UU_{\text{min}}, \ v\in\UU$ with $v([s_1, t_1]) = u([s_0, t_0])$ for some $t_i\geq s_i\geq 0 \ (i=0,1)$, then the inequality 
\begin{displaymath}
t_0\wedge T_\star(u) - s_0 \ \leq \ t_1 - s_1
\end{displaymath}
necessarily holds. \label{itm: T4} 
\item A solution $u\in\UU$ belongs to $\UU_{\text{min}}$ if for every $v\in\UU[\RR[u]]$ the following implication holds: whenever $u(t_0) = v(t_1)$ for some $t_0, t_1 \in [0, +\infty)$, then $t_0\wedge T_\star(u) \leq t_1$. \label{itm: T5}
\end{enumerate}
\end{theorem}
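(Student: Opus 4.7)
The plan is to first establish existence and the ``moreover'' in part (1) --- the engine for everything else --- and then derive (2) from minimality by a concatenation shortcut, and (3)--(5) from the characterisation via the order $\succ$. Fix $y\in\UU$ with $\RR[y]=\RR$ and carry out the step-by-step diagonal construction suggested before Section \ref{sec: existence}. For each $n\in\mathbb{N}$ set
\[
\alpha_n := \inf\bigl\{\rho(v) : v\in\TT[\UU],\ v=w(\cdot\wedge T)\ \text{for some}\ w\in\UU[\RR],\ T\ge 0,\ \RR[v]=y([0,n])\bigr\}.
\]
Axiom \ref{itm: C} together with the lower semicontinuity (\ref{eq: rho lsc}) yields a minimiser $v_n\in\TT[\UU]$ with $\rho(v_n)=\alpha_n$, which can be extended via \ref{itm: 2} into a $u_n\in\UU[\RR]$ that is ``fastest through $y([0,n])$.'' A diagonal selection in $n$, invoking \ref{itm: C} once more, produces a map defined on $[0,\sup_n\alpha_n)$; if this supremum is finite, axiom \ref{itm: 4} supplies the boundary limit and the constant tail, and \ref{itm: 5} assembles the pieces into a single solution $u\in\UU[\RR]$. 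To show $u\in\UU_{\text{min}}$ and the stronger assertion that every $v\in\UU[\RR]$ satisfies $v\succ u$, define $\sfz(t)$ as the least $s\ge 0$ with $v(s)=u(t)$; axiom \ref{itm: 3} makes $\sfz$ well-defined and non-decreasing, and the stepwise optimality of the $u_n$'s combined with (\ref{eq: cont h}) delivers the $1$-Lipschitz estimate. Uniqueness is then automatic, since two minimal solutions $u_1,u_2 \in \UU[\RR]$ would satisfy $u_1\succ u_2$ and $u_2\succ u_1$, whence $u_1=u_2$. The principal obstacle is bookkeeping: \ref{itm: C} requires a common range across all truncations in any sequence, so the stopping times and continuation pieces have to be chosen carefully at every step.

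\textbf{Part (2).} If a minimal $u$ satisfies $u(s)=u(t)$ with $s<t<T_\star(u)$, then (\ref{eq: 3}) forces $u$ to be constant on $[s,t]$; the shortcut $v(r):=u(r)$ on $[0,s]$, $v(r):=u(r+(t-s))$ on $(s,+\infty)$ belongs to $\UU$ by \ref{itm: 1}--\ref{itm: 2}, and the piecewise-linear $\sfz$ which is the identity on $[0,s]$, constant on $[s,t]$, and equal to $r-(t-s)$ on $[t,+\infty)$ witnesses $u\succ v$. Non-constancy of $u$ past $T_\star(u)$ (another application of (\ref{eq: 3})) forces $u\ne v$, contradicting minimality.

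\textbf{Parts (3) and (4).} For (3), the ``moreover'' in (1) applied to the range $\RR[v]$ gives $v\succ u$, so $u(t_0)=v(t_1)=u(\sfz(t_1))$; since (\ref{eq: 3}) forces $u(r)\ne u(T_\star(u))$ for every $r<T_\star(u)$, the injectivity from (2) compels $\sfz(t_1)=t_0$ when $t_0<T_\star(u)$ and $\sfz(t_1)\ge T_\star(u)$ otherwise, so in both cases $t_0\wedge T_\star(u)\le\sfz(t_1)\le t_1$. For (4), if $u$ is constant on $[s_0,t_0]$ then $T_\star(u)\le s_0$ and the claim is trivial; else \ref{itm: 3} furnishes $l_1<l_2$ in $[s_1,t_1]$ with $v(l_1)=u(s_0)$ and $v(l_2)=u(t_0)$, and the concatenation
\[
w(r) := \begin{cases} u(r), & r\le s_0, \\ v(r-s_0+l_1), & s_0<r\le s_0+l_2-l_1, \\ u(r-s_0-l_2+l_1+t_0), & r>s_0+l_2-l_1,\end{cases}
\]
lies in $\UU[\RR[u]]$ by \ref{itm: 1}--\ref{itm: 2}; the ``moreover'' in (1) yields $w\succ u$, and applying the argument of (3) to $w$ at time $s_0+l_2-l_1$ gives $t_0\wedge T_\star(u)-s_0\le l_2-l_1\le t_1-s_1$.

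\textbf{Part (5).} If $u$ satisfies the stated implication and $u\succ v$, then (\ref{eq: U[R] motiv}) places $v\in\UU[\RR[u]]$ and $u(t)=v(\sfz(t))$ for some non-decreasing $1$-Lipschitz $\sfz$ with $\sfz(0)=0$. Applying the hypothesis to $(t_0,t_1)=(t,\sfz(t))$ gives $t\wedge T_\star(u)\le\sfz(t)\le t$, so $\sfz(t)=t$ on $[0,T_\star(u)]$ and $u=v$ there. Past $T_\star(u)$, the range constraint $v\in\UU[\RR[u]]$ together with \ref{itm: 3} applied to the pair $(u,v)$ forces $v$ to be constant on $[T_\star(u),+\infty)$, so $u=v$ and hence $u\in\UU_{\text{min}}$.
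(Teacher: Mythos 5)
Your parts (2)--(5) are essentially sound and follow the paper's line: (2) is the same shortcut-concatenation argument; for (3)/(4) you invert the paper's order (the paper proves (4) directly by splicing $v|_{[s_1,t_1]}$ into $u$ and gets (3) as a special case, while you prove (3) from the ``moreover'' of (1) together with injectivity and reduce (4) to it via the splice on $[l_1,l_2]$), which is a legitimate variation; (5) matches the paper, although your closing justification for the tail ($t\ge T_\star(u)$) should rest on $v\in\UU[\RR[v]]$ and (\ref{eq: 3}) rather than on ``\ref{itm: 3} applied to the pair $(u,v)$'' alone.

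The genuine gaps are in part (1), which is the engine of the whole theorem. First, the ``diagonal selection in $n$, invoking \ref{itm: C} once more'' is not a valid use of \ref{itm: C}: that hypothesis applies only to a sequence of truncated solutions sharing a \emph{common} range, whereas your stage-$n$ minimisers have the growing ranges $y([0,n])$. No compactness argument is available (or needed) across stages; what actually makes the construction work is that the minimiser $u_n$ of $\rho$ over $\{w\in\TT[\UU]:\RR[w]=\RR_n\}$ is \emph{unique} (proved by a shortcut-concatenation contradiction) and that its truncations $u_n(\cdot\wedge s)$ are again minimisers for their own ranges, whence $u_{n+1}=u_n$ on $[0,S_n]$; only then can one glue and invoke \ref{itm: 5}, resp.\ \ref{itm: 4} when $\sup_n S_n<+\infty$. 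You name this ``bookkeeping'' obstacle but do not resolve it, and without the uniqueness/consistency step there is no well-defined limit map. Second, the $1$-Lipschitz bound on the reparametrisation is the crux and you merely assert that ``stepwise optimality combined with (\ref{eq: cont h}) delivers'' it. It requires a further contradiction argument: if $\sfz_n(t_2)-\sfz_n(t_1)>t_2-t_1$, splicing the segment $y|_{[t_1,t_2]}$ into $u_n$ in place of $u_n|_{[\sfz_n(t_1),\sfz_n(t_2)]}$ produces a truncated solution with the same range and strictly smaller $\rho$, contradicting minimality; this uses \ref{itm: 1}--\ref{itm: 3} and is not automatic. Finally, a direction slip: to witness $v\succ u$ you must take $\sfz(t)=\min\{s\ge 0: u(s)=v(t)\}$, so that $v(t)=u(\sfz(t))$; you defined the map the other way around.
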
 

\begin{proof}
(1). Let $\RR = \RR[y] \subset \SS$ be the range of a solution $y\in\UU$. 
We distinguish between two cases: $T_\star(y) = +\infty$ and $T_\star(y) < +\infty$. In the first case we select an increasing sequence of times $T_n \uparrow +\infty$ with $y(T_n) \neq y(T_{n+1})$ for all $n\in\mathbb{N}$. Then we have
\begin{displaymath}
y([0, T_n]) \subsetneq y([0, T_{n+1}]), \quad \bigcup_{n}^{}{y([0, T_n])} = \RR. 
\end{displaymath} 
If $T_\star(y) < +\infty$, we may go through the following proof with just one step $n=1$ and $T_1 := T_\star(y)$.  

For every $n \ (n\in\mathbb{N} \text{ or } n=1)$, we minimize $\rho$ (defined in (\ref{eq: rho})) in
\begin{displaymath}
\mathcal{G}[\RR_n] := \{w\in\TT[\UU] \ | \ \RR[w] = \RR_n\}, \quad \RR_n := y([0, T_n]).  
\end{displaymath}
Since $y(\cdot \wedge T_n) \in \mathcal{G}[\RR_n]$ and thus $\inf_{w\in\mathcal{G}[\RR_n]} \rho(w) \leq T_n < +\infty$, the compactness hypothesis \ref{itm: C} and (\ref{eq: rho lsc}) yield the existence of a minimizer $u_n\in\mathcal{G}[\RR_n]$ of $\rho|_{\mathcal{G}[\RR_n]}$. By (\ref{eq: cont h}), $u_n$ is constant in $[\rho(u_n), +\infty)$. We show that $u_n$ is the unique minimizer of $\rho$ in $\mathcal{G}[\RR_n]$. Suppose that there exist $\tilde{u}_n\in\mathcal{G}[\RR_n], \ t_0 \geq 0$ with $\rho(\tilde{u}_n) = \rho(u_n), \ \tilde{u}_n(t_0)\neq u_n(t_0)$. Then it follows from \ref{itm: 3} that $t_0\in(0, \rho(u_n))$ and that there exists $s_0\in [0, \rho(u_n))$, w.l.o.g. $s_0 < t_0$, such that $\tilde{u}_n(s_0) = u_n(t_0)$ and $\tilde{u}_n([0, s_0]) = u_n([0, t_0])$. By \ref{itm: 1} and \ref{itm: 2}, we may construct a truncated solution $w\in\mathcal{G}[\RR_n]$,  
\begin{displaymath}
w(r):= \begin{cases}
\tilde{u}_n(r) &\text{ if } r\in[0, s_0] \\
u_n(r+t_0-s_0) &\text{ if } r > s_0
\end{cases}
\end{displaymath} 
satisfying $\rho(w) \leq \rho(u_n) + s_0 - t_0 < \rho(u_n)$, in contradiction to $u_n$ minimizing $\rho$ in $\mathcal{G}[\RR_n]$. So $\rho$ admits a unique minimizer $u_n$ in $\mathcal{G}[\RR_n]$.

The same argument shows that $u_n$ is injective in $[0, \rho(u_n)]$. \\

We now set $S_n := \rho(u_n) \leq T_n$ and define $\sfz_n: [0, T_n] \to [0, S_n]$ as  
\begin{displaymath}
  \sfz_n(t):=\min\Big\{s\in [0,S_n]:u_n(s)=y(t)\Big\}, \quad t\in[0, T_n].  
\end{displaymath}
The map $\sfz_n$ is increasing by \ref{itm: 3}, and $\sfz_n(0) = 0, \ \sfz_n(T_n) = S_n$. It holds that $u_n(\sfz_n(t)) = y(t)$ for all $t\in [0, T_n]$. A contradiction argument shows that $\sfz_n$ is $1$-Lipschitz. Suppose that there exist $t_1, t_2 \in [0, T_n], \ t_1 < t_2$, such that $\delta_t := t_2 - t_1 < \sfz_n(t_2) - \sfz_n(t_1) =: \delta_\sfz$. Then let us construct the map $w: [0, +\infty) \to \SS$, 
\begin{displaymath}
  w(r):=
  \begin{cases}
    u_n(r)&\text{if }0\le r\le \sfz_n(t_1),\\
    y(r+t_1-\sfz_n(t_1))&\text{if }\sfz_n(t_1)\le r\le
    \delta_t+\sfz_n(t_1)\\
    u_n(r+\delta_\sfz-\delta_t)&\text{if }r\ge \delta_t+\sfz_n(t_1).
  \end{cases}
\end{displaymath}
which belongs to $\mathcal{G}[\RR_n]$ by \ref{itm: 1} - \ref{itm: 3}. Moreover, $\rho(w)\leq S_n - \delta_\sfz + \delta_t < S_n$, a contradiction to the fact that $u_n$ minimizes $\rho$ in $\mathcal{G}[\RR_n]$.     \\

A further contradiction argument (which we omit since it is very similar to the preceding two) shows that $S_n < S_{n+1}$ and that $u_n(\cdot \wedge s)$ minimizes $\rho$ in 
\begin{displaymath}
\{w\in\TT[\UU] \ | \ \RR[w] = u_n([0, s])\}
\end{displaymath}
if $s\in[0, S_n]$. In particular, we obtain 
\begin{equation}\label{eq: u_n = u_n+1}
u_n(s) = u_{n+1}(s), \quad \sfz_n(t) = \sfz_{n+1}(t) \quad \text{for every } s\in [0, S_n], \ t\in[0, T_n]. 
\end{equation} 
Let $S_\star := \sup_n S_n$. Due to (\ref{eq: u_n = u_n+1}), we may define $u: [0, S_\star) \to \SS$ as
\begin{equation}\label{eq: u}
u(s) := u_n(s) \quad \text{if } s\in [0, S_n],
\end{equation}
and $\sfz: [0, T_\star(y)) \to [0, S_\star)$ as
\begin{equation}\label{eq: sfz}
\sfz(t) := \sfz_n(t) \quad \text{if } t\in [0, T_n]. 
\end{equation}
If $S_\star = +\infty$, then the map $u: [0, +\infty) \to \SS$ belongs to $\UU$ by \ref{itm: 5}. Since it holds that 
$T_\star(y) = +\infty$ in this case, we obtain $y(t) = u(\sfz(t))$ for all $t\geq 0$. In particular, $y\succ u$. If $S_\star < + \infty$ and $T_\star(y) = +\infty$, we apply hypothesis \ref{itm: 4} which provides that the limit $\lim_{t\uparrow +\infty} y(t) =: u_\star \in \SS$ is well-defined in this case and that extending $u$ by the constant value $u_\star$ yields a map in $\UU$, i.e. $u: [0, +\infty) \to \SS$ defined as   
\begin{equation}\label{eq: u ext}
u(s) := \begin{cases}
u_n(s) &\text{ if } s\in [0, S_n] \\
u_\star &\text{ if } s\geq S_\star
\end{cases}
\end{equation}
belongs to $\UU$. Again we obtain $y(t) = u(\sfz(t))$ for all $t\geq 0$, and thus $y\succ u$. The same goes for the case $S_\star = S_1, \ T_\star(y) < +\infty$: in this case we may extend $u$ as in (\ref{eq: u ext}) due to \ref{itm: 1} and \ref{itm: 2}, and extending $\sfz$ by the constant value $S_\star$, we obtain $y\succ u$.    

We note that $u\in\UU[\RR]$ and $\UU[\RR[u]] \subset \UU[\RR]$. \\

Suppose now that 
\begin{equation}\label{eq: T1}
v\succ u \quad \text{for all } v\in\UU[\RR]. 
\end{equation}
Then, due to (\ref{eq: U[R] motiv}), it follows that for every $\bar{u}\in \UU$, $u \succ \bar{u}$ yields $u = \bar{u}$. This shows that $u\in\UU_{\text{min}}$, and by (\ref{eq: T1}) again, $u$ is the unique minimal solution in $\UU[\RR]$. \\

So it only remains to prove (\ref{eq: T1}): \\

Let $v\in\UU[\RR]$. Let $S_n$ be as in the construction of $u$. For every $S_n$, choose $0\leq \tilde{T}_n \leq T_\star(v)$ such that $v(\tilde{T}_n) = u(S_n)$. By \ref{itm: 3}, $v([0, \tilde{T}_n]) = u([0, S_n])$ and $(\tilde{T}_n)$ is increasing. We set $\tilde{T}_\star := \sup_n \tilde{T}_n\leq T_\star(v)$. We show that $\tilde{T}_\star = T_\star(v)$. Suppose that $\tilde{T}_\star < T_\star (v)$ (which implies $S_\star \leq \tilde{T}_\star < +\infty$). Then we obtain by \ref{itm: 3}, since $v([0, \tilde{T}_\star)) = u([0, S_\star))$, that there exists $\delta > 0$ such that $v$ is constant in $(\tilde{T}_\star - \delta, \tilde{T}_\star)$, contradicting the fact that $v(T_n) \neq v(T_m)$ for $n\neq m$. If there is only one step $n=1$ in the construction of $u$ and $S_\star = S_1$, then we clearly have $T_\star(v) < +\infty$ and $\tilde{T}_\star = \tilde{T}_1 = T_\star(v)$. 

We define $\tilde{\sfz}: [0, T_\star(v)) \to [0, S_\star)$ as
\begin{displaymath}
\tilde{\sfz}(t) := \min \Big\{s\in [0,S_\star):u(s)=v(t)\Big\}, \quad t\in[0, T_\star(v)).
\end{displaymath}
It holds that $v(t) = u(\tilde{\sfz}(t))$ for all $t\in [0, T_\star(v))$.
Following the same arguments as above for $\sfz$, we obtain that $\tilde{\sfz}$ is increasing and $1$-Lipschitz. Extending $\tilde{\sfz}$ by the constant value $S_\star \leq T_\star(v)$ if $T_\star(v) < +\infty$, we obtain $v\succ u$.

The proof of \ref{itm: T1} is complete. 

\medskip\noindent
The statements \ref{itm: T2} - \ref{itm: T5} are direct consequences of our method of constructing the minimal solutions. However, we provide independent proofs. 

\medskip\noindent
(2). Let $u\in\UU_{\text{min}}$ and suppose that there exist $0\leq t_0 < t_1 < T_\star(u)$ such that $u(t_0) = u(t_1)$. By (\ref{eq: 3}) it follows that $u(r) = u(t_0)$ for all $r\in [t_0, t_1]$. Now we define $w: [0, +\infty) \to \SS$ as
\begin{displaymath}
w(t) := \begin{cases}
u(t) &\text{ if } 0\leq t \leq t_0 \\
u(t+t_1-t_0) &\text{ if } t > t_0
\end{cases}
\end{displaymath}
which belongs to $\UU$ by \ref{itm: 1} and \ref{itm: 2}. Choosing $\sfz(t):= t \wedge t_0 + (t-t_1)_+$, we see that $u\succ w$, which yields $w = u$ since $u$ is minimal. This implies $u(r) = u(r + t_1 - t_0)$ for all $r \geq t_0$. Due to (\ref{eq: 3}), it follows that $u$ is constant in $[t_0, +\infty)$, in contradiction to $t_0 < T_\star(u)$. So $u$ is injective in $[0, T_\star(u))$. 

\medskip\noindent
(3) is a special case of \ref{itm: T4}. 

\medskip\noindent
(4). Let $u\in\UU_{\text{min}}, \ v\in\UU$ and $t_i\geq s_i\geq 0$ such that $v([s_1, t_1]) = u([s_0, t_0])$. If $T_\star(u) < +\infty$, we may assume w.l.o.g. that $s_0 < t_0 \leq T_\star(u)$. We note that $v(s_1) = u(s_0)$ and $v(t_1) = u(t_0)$ by \ref{itm: 3}, and define $w: [0, +\infty) \to \SS$ as 
\begin{displaymath}
w(r) := \begin{cases}
u(r) &\text{ if } 0\leq r \leq s_0 \\
v(r+s_1-s_0) &\text{ if } s_0 < r \leq t_1-s_1+s_0 \\
u(r+t_0-s_0+s_1-t_1) &\text{ if } r > t_1-s_1+s_0
\end{cases}
\end{displaymath} 
which belongs to $\UU$ by \ref{itm: 1} and \ref{itm: 2}, with $\RR[w] = \RR[u]$. Due to \ref{itm: T1}, it holds that $w\succ u$, i.e. there exists an increasing $1$-Lipschitz continuous map $\sfz: [0, +\infty) \to [0, +\infty)$ such that $u(\sfz(t)) = w(t)$ for all $t\in [0, +\infty)$. Since $u$ is injective in $[0, T_\star(u))$ (see statement \ref{itm: T2}), it follows that $\sfz(s_0) = s_0$ and $\sfz(t_1 - s_1 + s_0) \geq t_0$. So we obtain
\begin{displaymath}
t_0 - s_0 \ \leq \ \sfz(t_1 - s_1 + s_0) - \sfz(s_0) \ \leq \ t_1 - s_1 
\end{displaymath}  
by the $1$-Lipschitz continuity of $\sfz$. This proves \ref{itm: T4}. 

\medskip\noindent
(5). Suppose that $u\in\UU$ satisfies the assumption of claim \ref{itm: T5} and that $u\succ v$ for some $v\in\UU$. Then there exists an increasing $1$-Lipschitz map $\sfz: [0, +\infty) \to [0, +\infty)$ such that $v(\sfz(t)) = u(t)$ for all $t\in [0, +\infty)$ and $\sfz(0) = 0$, hence $\sfz(t) \leq t$ for all $t\in[0, +\infty)$. Moreover, $v\in\UU[\RR[u]]$ due to (\ref{eq: U[R] motiv}). By assumption of \ref{itm: T5}, it follows that $t\leq \sfz(t)$ for all $t\in [0, T_\star(u))$. Taken together, this yields $\sfz(t) = t$ for all $t\in [0, T_\star(u))$, and thus $u=v$. So we obtain that $u$ is minimal. 

\medskip\noindent
The proof of Theorem \ref{thm: minimal solution} is complete.    
\end{proof}

\begin{remark}\label{rem: C}
In view of Definition \ref{def: minimal solution} and \ref{itm: C}, the sequential continuity (\ref{eq: cont h}) of the solutions appears a natural hypothesis in our concept (cf. the instances under consideration in Sections \ref{sec: generalized semiflows} and \ref{sec: gf}).

We do not make use of the compactness hypothesis \ref{itm: C} and of (\ref{eq: cont h}) in the proof of the statements \ref{itm: T2} and \ref{itm: T5}. 
\end{remark}

Time translates of minimal solutions are minimal soulutions and the concatenation of two minimal solutions yield a minimal solution: 
\begin{proposition}\label{prop: ms are gls}
Let $\UU$ be a generalized $\Lambda$-semiflow on $\SS$. Then it holds:  

For every $u\in\UU_{\text{min}}$ and $\tau\geq 0$, the map $u^{\tau}(t) := u(t + \tau), \ t\in[0, +\infty)$, belongs to $\UU_{\text{min}}$.

Whenever $u, v \in \UU_{\text{min}}$ with $v(0) = u(\bar{t})$ for some $\bar{t} \geq 0$ and $u$ is sequentially continuous (\ref{eq: cont h}), then the map $w: [0, +\infty) \to \SS$, defined by $w(t):=u(t)$ if $t\leq\bar{t}_\star$ and $w(t):=v(t-\bar{t}_\star)$ if $t > \bar{t}_\star$, with $\bar{t}_\star:= \bar{t}\wedge T_\star(u)$, belongs to $\UU_{\text{min}}$.
\end{proposition}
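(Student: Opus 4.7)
The time-translation assertion is a direct concatenation argument. Given $u\in\UU_{\text{min}}$ and $\tau\ge 0$, one has $u^\tau\in\UU$ by \ref{itm: 1}; if $u^\tau\succ v$ via some reparametrization $\sfz$, then $v(0)=u^\tau(0)=u(\tau)$, so \ref{itm: 2} produces $\tilde v\in\UU$ by concatenating $u$ and $v$ at time $\tau$. The map $\tilde\sfz(t):=t$ for $t\le\tau$ and $\tilde\sfz(t):=\tau+\sfz(t-\tau)$ for $t>\tau$ is increasing and $1$-Lipschitz with $\tilde\sfz(0)=0$, and satisfies $u(t)=\tilde v(\tilde\sfz(t))$; since $\overline{\RR[u^\tau]}\subset\overline{\RR[u]}$, we get $u\succ\tilde v$, and minimality of $u$ forces $u=\tilde v$, whence $v=u^\tau$.

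For the concatenation assertion, set $\bar t_\star:=\bar t\wedge T_\star(u)$. By the sequential continuity of $u$, the infimum defining $T_\star(u)$ is attained, so $u(\bar t_\star)=u(\bar t)=v(0)$, and \ref{itm: 2} applied at $\bar t_\star$ yields $w\in\UU$. Suppose $w\succ w'$ via some $\sfz$, and set $s_0:=\sfz(\bar t_\star)\le\bar t_\star$. The plan is to identify $w'$ with $w$ first on $[0,\bar t_\star]$ (using minimality of $u$) and then on $[\bar t_\star,+\infty)$ (using minimality of $v$).

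For the first step, form $\bar u\in\UU$ by concatenating $w'$ and $u^{\bar t_\star}$ at time $s_0$ (via \ref{itm: 2}, valid since $w'(s_0)=w(\bar t_\star)=u(\bar t_\star)$). The reparametrization $\bar\sfz(t):=\sfz(t)$ on $[0,\bar t_\star]$ and $\bar\sfz(t):=s_0+(t-\bar t_\star)$ on $[\bar t_\star,+\infty)$ is increasing and $1$-Lipschitz with $\bar\sfz(0)=0$ and satisfies $u(t)=\bar u(\bar\sfz(t))$; continuity of $\sfz$ gives $w'([0,s_0])=u([0,\bar t_\star])$, so $\RR[\bar u]=\RR[u]$ and thus $u\succ\bar u$. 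Minimality of $u$ forces $u=\bar u$, so $u(r)=u(r+\bar t_\star-s_0)$ for every $r>s_0$; the sequential continuity of $u$ together with \eqref{eq: 3} forces $u$ to be constant on $[s_0,\bar t_\star]$. Combined with the injectivity of $u$ on $[0,T_\star(u))$ from Theorem \ref{thm: minimal solution}\ref{itm: T2} and $\bar t_\star\le T_\star(u)$, this pins $s_0=\bar t_\star$; the $1$-Lipschitz property of $\sfz$ then gives $\sfz(t)=t$ on $[0,\bar t_\star]$, and $w'=u=w$ on $[0,\bar t_\star]$.

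For the second step, let $v':=w'(\cdot+\bar t_\star)\in\UU$ (\ref{itm: 1}) and $\sigma(s):=\sfz(s+\bar t_\star)-\bar t_\star$, so that $v(s)=v'(\sigma(s))$. The delicate point is the inclusion $\RR[v']\subset\overline{\RR[v]}$: from $w\succ w'$ one has $\RR[w']\subset\overline{\RR[w]}=u([0,\bar t_\star])\cup\overline{\RR[v]}$, so we must rule out that a point $v'(s)=u(r)$ with $r\in[0,\bar t_\star]$ escapes $\overline{\RR[v]}$; by the first step $w'(r)=u(r)=w'(s+\bar t_\star)$, and \eqref{eq: 3} applied to $w'$ on $[r,s+\bar t_\star]$ forces $w'(\bar t_\star)=u(r)$, i.e., $u(r)=u(\bar t_\star)=v(0)\in\RR[v]$. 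Thus $v\succ v'$, minimality of $v$ gives $v=v'$, and together with the first step $w'=w$. The main obstacle is the rigidity argument pinning $s_0=\bar t_\star$: the sequential continuity of $u$ is essential there, allowing one to pass from the identity $u(r)=u(r+\bar t_\star-s_0)$, valid only for $r>s_0$, to the endpoint equality $u(s_0)=u(\bar t_\star)$ needed to invoke injectivity.
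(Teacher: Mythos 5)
Your proof is correct and follows essentially the same route as the paper: for the translation statement you concatenate $u|_{[0,\tau]}$ with $v$ and invoke minimality of $u$, and for the concatenation statement you split the dominated solution at $\sfz(\bar t_\star)$, compare the first piece (extended by $u^{\bar t_\star}$) with $u$ and the tail with $v$, and use injectivity of $u$ on $[0,T_\star(u))$ to force $\sfz(\bar t_\star)=\bar t_\star$. Your write-up is in fact somewhat more explicit than the paper's on two points it leaves implicit — the verification of the range inclusions $\RR[\bar u]\subset\overline{\RR[u]}$ and $\RR[v']\subset\overline{\RR[v]}$ needed to invoke $\succ$, and the rigidity argument (via (\ref{eq: 3}) and sequential continuity) that pins $s_0=\bar t_\star$.
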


\begin{proof}
We prove the first statement: Let $u\in\UU_{\text{min}}$ and $\tau \geq 0$. Suppose that $u^\tau\succ v$ for some $v\in\UU$. Then $v\in\UU[\RR[u^\tau]]$ and there exists an increasing $1$-Lipschitz map $\sfz: [0, +\infty) \to [0, +\infty)$ such that $u(t+\tau) = u^\tau(t) = v(\sfz(t))$ for all $t\geq 0$. We define $\tilde{v}: [0, +\infty) \to \SS$ as 
\begin{displaymath}
\tilde{v}(t) := \begin{cases}
u(t) &\text{ if } t\leq \tau \\
v(t-\tau) &\text{ if } t>\tau
\end{cases}
\end{displaymath}
which belongs to $\UU$ by \ref{itm: 2}. It holds that $\tilde{v}\in\UU[\RR[u]]$ and choosing $\tilde{\sfz}: [0, +\infty) \to [0, +\infty)$, 
\begin{displaymath}
\tilde{\sfz}(t) := \begin{cases}
t &\text{ if } t\leq \tau \\
\sfz(t-\tau) + \tau &\text{ if } t>\tau
\end{cases}
\end{displaymath}
we obtain $u\succ \tilde{v}$. Since $u$ is minimal, it follows that $u = \tilde{v}$, hence $u^{\tau} = v$ and the claim is proved. 

Now, we prove the second statement: Let $u, v\in\UU_{\text{min}}, \ \bar{t}\geq 0$ be given, set $\bar{t}_\star:=\bar{t}\wedge T_\star(u)$ and define $w:[0, +\infty) \to \SS$ as 
\begin{displaymath}
w(t) := \begin{cases}
u(t) &\text{ if } t\leq \bar{t}_\star \\
v(t-\bar{t}_\star) &\text{ if } t>\bar{t}_\star
\end{cases}
\end{displaymath} 
which belongs to $\UU$ by \ref{itm: 2}. 

Suppose that $w\succ y$ for some $y\in\UU$. Then $y\in\UU[\RR[w]]$ and there exists an increasing $1$-Lipschitz map $\sfz: [0, +\infty) \to [0, +\infty)$ such that $w(t) = y(\sfz(t))$ for all $t\geq 0$. We define $w_i: [0, +\infty) \to \SS \ (i=1,2)$ as
\begin{displaymath}
w_1(t) :=
\begin{cases}
y(t) &\text{ if } t\leq \sfz(\bar{t}_\star) \\
u(t + \bar{t}_\star - \sfz(\bar{t}_\star)) &\text{ if } t > \sfz(\bar{t}_\star)
\end{cases}
 \quad\quad w_2(t) := y(t + \sfz(\bar{t}_\star)). 
\end{displaymath}
Choosing $\sfz_i: [0, +\infty) \to [0, +\infty) \ (i=1,2)$, 
\begin{displaymath}
\sfz_1(t) := \begin{cases}
\sfz(t) &\text{ if } t\leq \bar{t}_\star \\
t+\sfz(\bar{t}_\star) - \bar{t}_\star &\text{ if } t > \bar{t}_\star
\end{cases}
 \quad\quad \sfz_2(t):= \sfz(t + \bar{t}_\star) - \sfz(\bar{t}_\star), 
\end{displaymath}
we see that $u\succ w_1$ and $v\succ w_2$. As $u, v$ are minimal solutions, it follows that $u = w_1, \ v = w_2$. Hence, $y(t) = u(t)$ for all $t \leq \sfz(\bar{t}_\star)$ and $y(t) = v(t-\sfz(\bar{t}_\star))$ for all $t > \sfz(\bar{t}_\star)$. Due to statement \ref{itm: T2} in Theorem \ref{thm: minimal solution}, the minimal solution $u$ is injective in $[0, T_\star(u))$. So, $u=w_1$ implies $\sfz(\bar{t}_\star)= \bar{t}_\star$ and we obtain $y = w$. The proof is complete. 
\end{proof}

\begin{remark}\label{rem: ms are gls}
Clearly, $\UU_{\text{min}}$ satisfies \ref{itm: 3}, and with similar arguments as in the proof of Proposition \ref{prop: ms are gls}, it is possible to show that $\UU_{\text{min}}$ satisfies \ref{itm: 4} and \ref{itm: 5}, too. 

The second statement of Proposition \ref{prop: ms are gls} still holds for $0\leq \bar{t} \leq T_\star(u)$ if we do not assume that $u$ is sequentially continuous. 
\end{remark}

\newpage
\section{Minimal solutions to generalized semiflows}\label{sec: generalized semiflows}

We study the theory developed in Section \ref{sec: 1} with regard to the concept of \textit{generalized semiflows} introduced by Ball \cite{ball2000continuity}.

\medskip
According to \cite{ball2000continuity, ball2004global}, we suppose that $\SS$ is a metric space with metric $d$ and we work with the topology induced by the metric, i.e. 
\begin{displaymath}
x_j \stackrel{\SS}{\to} x \quad :\Leftrightarrow \quad d(x_j, x) \to 0
\end{displaymath}
for $x_j, x\in \SS$.

We refer the reader to Definition \ref{def: generalized semiflow} for the definition of generalized semiflow. For a given generalized semiflow $\UU$, the following is defined in \cite{ball2000continuity}: 

\medskip
A \textit{complete orbit} is a map $w: \mathbb{R}\to\SS$ such that for any $s\in\mathbb{R}$, the map $w^s(t):= w(t+s), \ t\in[0, +\infty),$ belongs to $\UU$. A complete orbit $w$ is \textit{stationary} if $w(t) = x$ for all $t\in\mathbb{R}$, for some $x\in\SS$.
\begin{definition}\label{def: Lyapunov}\cite{ball2000continuity} A function $\psi: \SS\to \mathbb{R}$ is called a \textbf{Lyapunov function} for $\UU$ if the following holds
\begin{enumerate}[label={(L\arabic*)}]
\item $\psi$ is continuous, \label{itm: L1}
\item $\psi(u(t)) \leq \psi(u(s))$ for every $u\in\UU$ and $0\leq s\leq t < +\infty$, \label{itm: L2}
\item whenever the map $t\mapsto \psi(w(t)) \ (t\in\mathbb{R})$ is constant for some complete orbit $w$, then $w$ is stationary. \label{itm: L3}
\end{enumerate}
\end{definition}

\medskip
Generalized semiflows with Lyapunov function and continuous solutions are discussed in \cite{ball2004global, ball2000continuity}.
 
\medskip\noindent
\paragraph{Minimal solutions to generalized semiflows}

We find that any generalized semiflow with Lyapunov function and continuous solutions is a generalized $\Lambda$-semiflow, i.e. satisfies the hypotheses \ref{itm: 1} - \ref{itm: 5} in Definition \ref{def: sigma semiflow}. Moreover, the compactness hypothesis \ref{itm: C} is satisfied. 

We will see that the same holds good for any generalized semiflow with continuous solutions satisfying (\ref{eq: 3}). 

Also we will see that the presence of a function decreasing along solution curves allows of a further characterization of minimal solutions. \\ 

\begin{theorem}\label{thm: ms gs}
Let $\UU$ be a generalized semiflow on $\SS$. Suppose that there exists a function $\Psi: \SS \to \mathbb{R}$ for $\UU$ satisfying \ref{itm: L2} and \ref{itm: L3} and that every solution $u\in\UU$ is sequentially continuous, i.e.
\begin{displaymath}
u(t_j) \stackrel{\SS}{\to} u(t) \quad \text{whenever } t_j \to t, \ t_j, t \in [0, +\infty).
\end{displaymath} 
Then $\UU$ is a generalized $\Lambda$-semiflow, according to Definition \ref{def: sigma semiflow}, and satisfies the compactness hypothesis \ref{itm: C}. In particular, all the statements \ref{itm: T1} - \ref{itm: T5} of Theorem \ref{thm: minimal solution} hold good for $\UU$.  
\end{theorem}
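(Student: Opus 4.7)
My plan is to verify the five axioms \ref{itm: 1}--\ref{itm: 5} of Definition \ref{def: sigma semiflow} and the compactness hypothesis \ref{itm: C} in turn, using \ref{itm: G1}--\ref{itm: G4}, \ref{itm: L2}, \ref{itm: L3}, and sequential continuity of the solutions. Axioms \ref{itm: 1} and \ref{itm: 2} are immediate from \ref{itm: G2} and \ref{itm: G3}. For \ref{itm: 3}, Remark \ref{rem: 3} reduces matters to (\ref{eq: 3}), so suppose $u(s)=u(t)$ with $s<t$: by \ref{itm: L2}, $\Psi\circ u$ is non-increasing with equal endpoint values, hence constant on $[s,t]$. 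I would extend $u|_{[s,t]}$ periodically with period $t-s$ to a map $w:\mathbb{R}\to\SS$ and show it is a complete orbit---for each $\sigma\in\mathbb{R}$ and each $N>0$, \ref{itm: G2} and \ref{itm: G3} produce a member of $\UU$ agreeing with $w^\sigma$ on $[0,N]$, and \ref{itm: G4} then extracts a pointwise limit forced to equal $w^\sigma$. Since $\Psi\circ w$ is periodic and non-increasing on $\mathbb{R}$, it is constant; \ref{itm: L3} makes $w$ stationary, whence $u$ is constant on $[s,t]$.

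For \ref{itm: 4}, given $u\in\UU$ and $w:[0,\theta)\to\SS$ as in the hypothesis, I take extensions $v_T\in\UU$ of $w|_{[0,T]}$ for each $T<\theta$; all share $v_T(0)=w(0)$, so \ref{itm: G4} delivers a pointwise limit $v\in\UU$ with $v|_{[0,\theta)}=w$, and $v(\theta)$ exists by sequential continuity. Using (\ref{eq: 3}) (just proved) I define $\sigma(t):=\min\{s\in[0,\theta):v(s)=u(t)\}$; then $\sigma$ is increasing and bounded, the limit $\sigma_\infty:=\lim_{t\to+\infty}\sigma(t)$ lies in $[0,\theta]$, and a brief (\ref{eq: 3})-argument shows $v$ is constant on $[\sigma_\infty,\theta]$ at some value $w_\star$, so $u(t)\to w_\star$ by sequential continuity. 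Applying \ref{itm: G4} to the translates $u^{t_n}$ with $u(t_n)\to w_\star$ produces the constant solution at $w_\star$ in $\UU$, whose concatenation with $v$ at time $\theta$ via \ref{itm: G3} gives $\bar w\in\UU$. For \ref{itm: 5} I pick $v_n\in\UU$ extending $w|_{[0,n]}$: all have $v_n(0)=w(0)$, and \ref{itm: G4} yields a pointwise limit that coincides with $w$ on every bounded interval, hence equals $w$.

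For \ref{itm: C}, the common range $\RR[v_1]$ is compact, being the continuous image of the compact interval $[0,\rho(v_1)]$ in the metric space $\SS$. I fix a representative $u_1\in\UU$ with $v_1=u_1(\cdot\wedge T_1)$ (where $T_1:=\rho(v_1)$) and, for each $n$, pick $\tilde u_n\in\UU$ with $v_n=\tilde u_n(\cdot\wedge T_n)$, $T_n\le M:=\sup_n\rho(v_n)$. Two applications of \ref{itm: 3} to $(\tilde u_n,u_1)$ force $\tilde u_n(0)=u_1(0)$ and $\tilde u_n(T_n)=u_1(T_1)$, since $\tilde u_n$ can only traverse $\RR[v_1]$ in $u_1$'s forward order. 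I then glue $\tilde u_n|_{[0,T_n]}$ with the shift $u_1^{T_1}$ via \ref{itm: G3}, producing $u_n\in\UU$ with $v_n=u_n(\cdot\wedge T_n)$ and $u_n(t)=u_1(t-T_n+T_1)$ for $t\ge T_n$. Now $u_n(0)=u_1(0)$ for all $n$, so \ref{itm: G4} gives a subsequence $u_{n_k}\to u^\star\in\UU$ pointwise, and a further passage yields $T_{n_k}\to T^\star\in[0,M]$. Sequential continuity of $u_1$ makes $u^\star(t)=u_1(t-T^\star+T_1)$ for $t>T^\star$, and sequential continuity of $u^\star$ then forces $u^\star(T^\star)=u_1(T_1)$. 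Setting $v^\star:=u^\star(\cdot\wedge T^\star)\in\TT[\UU]$, a case split on $t$ versus $T^\star$ delivers pointwise convergence $v_{n_k}(t)\to v^\star(t)$; finally, \ref{itm: 3} applied to $(u_1,u^\star)$ shows $u^\star$ traverses $\RR[v_1]$ monotonically from $u_1(0)$ to $u_1(T_1)$, and an intermediate-value argument via the continuous increasing reparameterization $\phi^\star(t):=\min\{s\in[0,T_1]:u_1(s)=u^\star(t)\}$ gives $\RR[v^\star]=\RR[v_1]$. The main obstacle is precisely this compactness step: \ref{itm: G4} supplies only pointwise convergence, which a priori cannot control $u_{n_k}(T_{n_k})$ as $T_{n_k}$ varies; attaching the common tail $u_1^{T_1}$---legitimized by the monotonicity in \ref{itm: 3}---sidesteps the need to upgrade \ref{itm: G4} to uniform convergence on compact intervals.
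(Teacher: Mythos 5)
Your proposal is correct. For \ref{itm: 1}, \ref{itm: 2}, \ref{itm: 3} and \ref{itm: 5} it coincides with the paper's argument (periodic complete orbit plus \ref{itm: L3} for \ref{itm: 3}, a direct use of \ref{itm: G4} for \ref{itm: 5}), and your treatment of \ref{itm: 4} is a mild variant: you apply \ref{itm: G4} to the extensions of $w|_{[0,T]}$ and transport the limit back to $u$ through the reparametrization $\sigma$, whereas the paper applies \ref{itm: G4} to the concatenated maps $w(\cdot,T_n,S_n)$; both work. The genuinely different step is \ref{itm: C}. The paper extracts $\bar v$ from the untouched extensions $\bar v_n$ and then must control $\bar v_{n_k}(T_{n_k})$ as $T_{n_k}\to T$; for this it upgrades pointwise to uniform convergence on compact subsets of $(0,+\infty)$ via [\cite{ball2000continuity}, Thm.~2.2] together with the auxiliary claim (\ref{eq: thm 2.3}) --- and the paper explicitly flags this as the one place where metrizability of $\SS$ is used in an essential way. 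Your device of first normalizing the tails, gluing each $\tilde u_n|_{[0,T_n]}$ to the common continuation $u_1^{T_1}$ (legitimate because \ref{itm: 3} pins $\tilde u_n(0)=u_1(0)$ and $\tilde u_n(T_n)=u_1(T_1)$), makes the limit at and beyond $T^\star$ computable from the continuity of the single fixed solution $u_1$, so that the raw pointwise convergence supplied by \ref{itm: G4} suffices; the identity $\RR[v^\star]=\RR[v_1]$ is then recovered through the monotone, continuous reparametrization $\phi^\star$ and the intermediate value theorem (continuity of $\phi^\star$ following from (\ref{eq: 3}) and the definition of $\rho(v_1)$, which forces $\phi^\star(T^\star)=T_1$) rather than through uniform convergence. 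This buys a proof of \ref{itm: C} that bypasses Ball's Theorem 2.2 entirely, needing only that $\RR[v_1]$ is compact, hence closed --- a genuine simplification that also weakens the dependence on the metric structure of $\SS$.
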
 

\paragraph{Comment on the function $\Psi: \SS\to \mathbb{R}$}

We suppose that there exists a function $\Psi: \SS\to \mathbb{R}$ for $\UU$ satisfying \ref{itm: L2} and \ref{itm: L3}. If, in addition, $\Psi$ is continuous, then it is called a Lyapunov function for $\UU$ (according to \cite{ball2000continuity, ball2004global}, Definition \ref{def: Lyapunov} above). 

Please note that we do not need to require continuity of $\Psi$ in order to obtain the results of Theorem \ref{thm: ms gs}. 
  
\begin{proof} 
The existence hypothesis \ref{itm: G1} implies that $\UU$ is nonempty. 

The hypotheses \ref{itm: 1} and \ref{itm: 2} correspond to \ref{itm: G2} and \ref{itm: G3}. In order to prove \ref{itm: 3}, it is now sufficient to show (\ref{eq: 3}), due to Remark \ref{rem: 3}. Let $u\in\UU$ and $0\leq s < t < +\infty$ such that $u(s) = u(t)$. Then it follows that $\Psi(u(r)) = \Psi(u(s))$ for all $r\in [s, t]$ since $\Psi\circ u$ is decreasing. Applying \ref{itm: G2}, \ref{itm: G3} and \ref{itm: G4}, we obtain that the map $v: \mathbb{R} \to \SS$ defined as 
\begin{displaymath}
v(r):= u(r + s - j(t-s)) \quad \text{if } r\in [j(t-s), (j+1)(t-s)], \ j\in\mathbb{Z},
\end{displaymath}
is a complete orbit for $\UU$. It holds that $\Psi(v(r)) = \Psi(u(s))$ for all $r\in\mathbb{R}$ and we may conclude that $v$ is stationary, i.e. $u(r) = u(s)$ for all $r\in [s, t]$. This proves (\ref{eq: 3}). 

Now, let us show that $\UU$ satisfies \ref{itm: 4}. Let $u\in\UU$. Suppose that there exists a map $w: [0, \theta) \to \SS$ with $\theta < + \infty$ and $w([0, \theta)) = \RR[u]$ such that $w|_{[0, T]}$ can be extended to a map in $\UU$ for every $T\in [0, \theta)$. In particular, whenever $T\in [0, \theta), \ S\in[0, +\infty), \ w(T) = u(S)$, the map $w(\cdot, T, S) : [0, +\infty) \to \SS$ defined as  
\begin{displaymath}
w(t, T, S) := \begin{cases}
w(t) &\text{ if } 0\leq t \leq T \\ 
u(t + S - T) &\text{ if } t > T
\end{cases}
\end{displaymath}
belongs to $\UU$. If $T_\star(u) < +\infty$, the claim easily follows from hypothesis \ref{itm: 3} already proved above. If $T_\star(u) = +\infty$, we select an increasing sequence of times $S_n \uparrow +\infty$. Due to \ref{itm: 3}, we find a corresponding increasing sequence $(T_n)$ with $w(T_n) = u(S_n)$; moreover $T_n\uparrow \theta$: indeed, if $\sup_n T_n \leq T < \theta$ for some $T\in (0, \theta)$, then $w$ would be constant in a small interval around $\sup_n T_n$ since
\begin{displaymath}
\bigcup_n w([0, T_n]) \ = \ \bigcup_n u([0, S_n]) \ = \ \RR[u] \ = \ w([0, \theta)), 
\end{displaymath} 
in contradiction to $T_\star(u) = +\infty$. 

Applying \ref{itm: G4} to $w_n(\cdot) := w(\cdot, T_n, S_n)$, we obtain that there exists a subsequence $n_k \uparrow +\infty$ and $\bar{w}\in\UU$ such that $w_{n_k}(t) \stackrel{\SS}{\to} \bar{w}(t)$ for all $t\geq 0$. It holds that $\bar{w}(t) = w(t)$ for all $t\in [0, \theta)$. As a member of $\UU$, the map $\bar{w}$ is sequentially continuous in $(0, +\infty)$. Hence the limit $\lim_{t\uparrow \theta} w(t)$ exists and coincides with $\bar{w}(\theta) =: w_\star\in\SS$. In particular, 
\begin{displaymath}
u(S_n) \ = \ w(T_n) \ \stackrel{\SS}{\to} \ w_\star \quad (n\to+\infty). 
\end{displaymath}
Since the sequence $S_n\uparrow +\infty$ has been chosen arbitrarily, it follows that 
\begin{displaymath}
u(t_n) \stackrel{\SS}{\to} w_\star \quad \text{whenever } t_n\to +\infty, \quad \bar{w}(t) = w_\star \quad \text{for all } t\geq \theta,  
\end{displaymath}
which gives \ref{itm: 4}. 

The hypothesis \ref{itm: 5} directly follows from a simple application of \ref{itm: G4}. 

\medskip\noindent
Finally, we prove \ref{itm: C}. Let a sequence $v_n\in\TT[\UU], \ n\in\mathbb{N},$ be given, satisfying $\sup_n \rho(v_n) < +\infty$ and $\RR[v_n] = \RR[v_1]$ for all $n\in\mathbb{N}$. We may assume w.l.o.g. that $T_n := \rho(v_n) \to T$ for some $T\in[0, +\infty)$. We select $\bar{v}_n\in\UU$ such that $\bar{v}_n(t) = v_n(t)$ for all $t\in[0, T_n]$. We note that $v_n(0) = v_1(0)$ and $v_n(T_n) = v_1(T_1)$ by \ref{itm: 3}. Due to \ref{itm: G4}, there exists a subsequence $n_k\uparrow +\infty$ and a solution $\bar{v} \in \UU$ such that $\bar{v}_{n_k}(t) \stackrel{\SS}{\to} \bar{v}(t)$ for all $t\in[0, +\infty)$. Since all the solutions are continuous in $(0, +\infty)$, this convergence is uniform in compact subsets of $(0, +\infty)$ by [\cite{ball2000continuity}, Thm. 2.2]. Moreover, it holds that
\begin{equation}\label{eq: thm 2.3}
\text{whenever} \quad \bar{v}_{n_k}(s_k)\in\RR[v_1], \ s_k\to0, \quad \text{then} \quad \bar{v}_{n_k}(s_k)\to\bar{v}(0).
\end{equation}
We prove (\ref{eq: thm 2.3}) (cf. proof of [\cite{ball2000continuity}, Thm. 2.3]): 

Suppose that $(\bar{v}_{n_k}(s_k))_k$ does not converge to $\bar{v}(0)$. Since $\RR[v_1]$ is sequentially compact, we may extract a convergent subsequence (still denoted by $\bar{v}_{n_k}(s_k)$) converging to some $\bar{v}_0\in\SS, \ \bar{v}_0\neq \bar{v}(0)$. For every $t > 0$, we have $\bar{v}_{n_k}(t + s_k) \stackrel{\SS}{\to} \bar{v}(t)$ by the uniform convergence in compact subsets of $(0, +\infty)$. Due to \ref{itm: G2} and \ref{itm: G4}, the map $w: [0, +\infty) \to \SS$, 
\begin{displaymath}
w(r):= \begin{cases}
\bar{v}_0 &\text{ if } r=0 \\
\bar{v}(r) &\text{ if } r > 0
\end{cases}
\end{displaymath} 
belongs to $\UU$. As $\bar{v}, w\in\UU$ are sequentially continuous in $[0, +\infty)$, we obtain $w(0) = \bar{v}(0)$, in contradiction to $\bar{v}_0 \neq \bar{v}(0)$. This proves (\ref{eq: thm 2.3}).

\medskip
It follows that $\bar{v}(T) = v_1(T_1)$ and $v_{n_k}(t) \stackrel{\SS}{\to} v(t)$ for all $t\in[0, +\infty)$, with $v\in\TT[\UU]$ defined by $v(t) := \bar{v}(t\wedge T)$ for all $t\geq 0$. Moreover, as $v_1$ is continuous, we have $\RR[v]\subset \RR[v_1]$, and by the uniform convergence, we obtain that $\RR[v_1]\subset \RR[v]$. Hence, $\RR[v] = \RR[v_1]$, and the proof is complete.   
\end{proof}

\begin{remark}
Following the proof of Theorem \ref{thm: ms gs} without assuming continuity of the solutions, it is not difficult to see that any generalized semiflow admitting a function $\Psi$ as above (i.e. for which \ref{itm: L2} and \ref{itm: L3} hold) satisfies the hypotheses \ref{itm: 1} - \ref{itm: 3}, \ref{itm: 5} and 
\begin{enumerate}[label = {({h4})}]
\item If $u\in\UU$ and there exists a map $w: [0, \theta) \to \SS$ with $\theta < +\infty$ such that $w|_{[0, T]}$ can be extended to a map in $\UU$ for every $T\in[0,\theta)$, and $w([0, \theta)) = \RR[u]$, 
then the $\omega$-limit set 
\begin{displaymath}
\omega(u):=\{w_\star\in\SS \ | \ \exists t_n \to +\infty, \ u(t_n)\stackrel{\SS}{\to} w_\star \}
\end{displaymath}
of $u$ is nonempty and there exists a map $\bar{w}: [0, +\infty) \to \SS$ in $\UU$ satisfying
\begin{displaymath}
\bar{w}(t) = w(t) \quad \text{if } t < \theta, \quad \bar{w}(t) \in \omega(u) \quad \text{if } t\geq \theta. 
\end{displaymath} 
\end{enumerate}
We notice that if $\Psi$ is continuous, then $\Psi$ is constant on $\omega(u)$.
\end{remark} 

We note that the only point in the proof of Theorem \ref{thm: ms gs} where the function $\psi$ plays a role is when we prove \ref{itm: 3}. Furthermore, the arguments in the proof of \ref{itm: 3} show that a generalized semiflow fails to satisfy \ref{itm: 3} if and only if it admits a nonconstant periodic orbit. So we obtain

\newpage
\begin{theorem}\label{thm: 3 neu}
Let $\UU$ be a generalized semiflow on $\SS$. 

If every solution $u\in\UU$ is sequentially continuous and satisfies (\ref{eq: 3}), then $\UU$ is a generalized $\Lambda$-semiflow satisfying the compactness hypothesis \ref{itm: C} and all the statements \ref{itm: T1} - \ref{itm: T5} of Theorem \ref{thm: minimal solution} hold good for $\UU$. 

If there exists a solution $u\in\UU$ which does not satisfy (\ref{eq: 3}), then there exists a nonconstant solution $v\in\UU$ and $\mu > 0$ such that $v(r) = v(r + \mu)$ for all $r\geq 0$. 
\end{theorem}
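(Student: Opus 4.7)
The plan is to derive both halves of the theorem from a careful audit of the proof of Theorem \ref{thm: ms gs}, where the auxiliary function $\Psi$ is invoked in exactly one place: when deducing the identity (\ref{eq: 3}).

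For the first assertion, I would simply revisit the proof of Theorem \ref{thm: ms gs} step by step. The translation and concatenation axioms \ref{itm: 1}, \ref{itm: 2} are immediate from \ref{itm: G2}, \ref{itm: G3}; the local axiom \ref{itm: 5} is immediate from \ref{itm: G4}; and the verifications of \ref{itm: 4} and of the compactness hypothesis \ref{itm: C} rely only on \ref{itm: G4}, sequential continuity, the auxiliary results from \cite{ball2000continuity}, and the form of \ref{itm: 3}---which, by Remark \ref{rem: 3}, is equivalent to (\ref{eq: 3}). The sole step of the proof of Theorem \ref{thm: ms gs} in which $\Psi$ together with \ref{itm: L2}, \ref{itm: L3} enters is the passage from $u(s)=u(t)$ to constancy of $u$ on $[s,t]$. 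Since (\ref{eq: 3}) is now part of the hypothesis, that step is trivially available and every remaining line of the argument transfers verbatim, yielding \ref{itm: 1}--\ref{itm: 5}, \ref{itm: C}, and hence the full conclusion of Theorem \ref{thm: minimal solution}.

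For the second assertion, I would pick $u\in\UU$ violating (\ref{eq: 3}), fix $0\le s<\bar r<t<+\infty$ with $u(s)=u(t)$ and $u(\bar r)\neq u(s)$, and set $\mu:=t-s>0$ together with $v_0:=u^s\in\UU$ (by \ref{itm: G2}). Then $v_0(0)=v_0(\mu)$ and $v_0(\bar r-s)\neq v_0(0)$ with $\bar r-s\in(0,\mu)$. The construction I have in mind is the $\mu$-periodic extension $v^\star$ of $v_0|_{[0,\mu]}$. By iterated application of the concatenation axiom \ref{itm: G3} (the relation $v_0(0)=v_0(\mu)$ making each step legitimate), one produces a sequence $v_n\in\UU$ coinciding with $v^\star$ on $[0,(n+1)\mu]$ and all sharing the initial value $v_0(0)$. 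The upper-semicontinuity axiom \ref{itm: G4} then yields a subsequence converging pointwise to some $v\in\UU$, and since for every $r\ge0$ one has $v_n(r)=v^\star(r)$ as soon as $n$ is large enough, the limit is forced to coincide with $v^\star$. The $\mu$-periodicity of $v^\star$ and the inequality $v^\star(\bar r-s)\neq v^\star(0)$ are built into the construction.

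The main obstacle lies in this last passage to the limit: in contrast to the abstract framework of Section \ref{sec: 1}, generalized semiflows carry no local characterisation axiom analogous to \ref{itm: 5}, so one cannot simply declare the periodic extension to be a solution. The indirect route through \ref{itm: G4} applied to the approximating sequence $(v_n)$, exploiting that this sequence stabilises pointwise to $v^\star$, is precisely what transports the periodic extension back into $\UU$.
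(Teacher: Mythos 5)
Your proposal is correct and follows essentially the same route as the paper: the paper proves Theorem \ref{thm: 3 neu} precisely by observing that $\Psi$ enters the proof of Theorem \ref{thm: ms gs} only in the derivation of (\ref{eq: 3})/\ref{itm: 3} (so assuming (\ref{eq: 3}) directly makes the rest of that proof go through verbatim), and by noting that the periodic extension built there via iterated concatenation \ref{itm: G3} and the limit passage through \ref{itm: G4} produces a nonconstant periodic solution whenever (\ref{eq: 3}) fails. Your explicit stabilising-sequence argument for transporting the periodic extension into $\UU$ is exactly the mechanism the paper invokes.
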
 

Our next remark concerns the topological setting. 
\begin{remark}
The theory of generalized semiflows has been developed by Ball \cite{ball2000continuity, ball2004global} for metric spaces. 
The only (but critical) point where we make explicit use of the metrizability of the topology is when we apply [\cite{ball2000continuity}, Thm. 2.2] in the proof of the compactness hypothesis \ref{itm: C}.  
\end{remark}

\medskip
We conclude this section with a characterization of minimal solutions in terms of a function which decreases along solution curves. 

\begin{proposition}\label{prop: 1}
Let a topological space $\SS$ endowed with a Hausdorff topology be given. Let $\UU$ be a generalized $\Lambda$-semiflow on $\SS$ satisfying the compactness hypothesis \ref{itm: C}. Suppose that every solution $u\in\UU$ is sequentially continuous (\ref{eq: cont h}) and that there exists a function $\Psi: \SS \to \mathbb{R}$ which decreases along solution curves, i.e.
\begin{displaymath}
\Psi(u(t)) \ \leq \ \Psi(u(s)) \quad \text{for every} \quad 0\leq s < t <+\infty, \quad u\in\UU.  
\end{displaymath}
Then the following holds: 

\medskip
Whenever $u\in\UU_{\text{min}}, \ v\in\UU$ with $u\in\UU[\RR[v]]$, then $\Psi(u(t)) \leq \Psi(v(t))$ for all $t\in[0, +\infty)$. 

Whenever $u\in\UU$ and $\Psi$ is injective on $\RR[u]$, then $u$ belongs to $\UU_{\text{min}}$ if $\Psi(u(t)) \leq \Psi(v(t))$ for every $v\in\UU[\RR[u]]$ and $t\in[0, +\infty)$. 
\end{proposition}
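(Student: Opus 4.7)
The plan is to derive both implications from Theorem \ref{thm: minimal solution}, using parts \ref{itm: T1} and \ref{itm: T5} together with the monotonicity of $\Psi$ along solutions.

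For the first implication, given $u\in\UU_{\text{min}}$ with $u\in\UU[\RR[v]]$, I would apply part \ref{itm: T1} with $\RR=\RR[v]$. Since the minimal solution inside $\UU[\RR[v]]\cap\UU_{\text{min}}$ is unique, $u$ must coincide with it; and the ``moreover'' clause, applied to $v\in\UU[\RR[v]]$ (which holds trivially with $\theta=+\infty$), yields $v\succ u$. Unwinding the definition of $\succ$, one obtains an increasing $1$-Lipschitz map $\sfz$ with $\sfz(0)=0$ and $v(t)=u(\sfz(t))$, whence $\sfz(t)\le t$; the monotonicity of $\Psi\circ u$ then gives $\Psi(v(t))=\Psi(u(\sfz(t)))\ge\Psi(u(t))$.

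For the second implication, I would verify the sufficient condition in part \ref{itm: T5}. Fix $v\in\UU[\RR[u]]$ and suppose $u(t_0)=v(t_1)$ with, toward contradiction, $t_1<t_0\wedge T_\star(u)$. Combining $\Psi(u(t_0))=\Psi(v(t_1))$ with the standing inequality $\Psi(u(t_1))\le\Psi(v(t_1))$ and the monotonicity $\Psi(u(t_1))\ge\Psi(u(t_0))$ (since $t_1<t_0$) gives $\Psi(u(t_0))=\Psi(u(t_1))$. Injectivity of $\Psi$ on $\RR[u]$ then forces $u(t_0)=u(t_1)$, and (\ref{eq: 3}) makes $u$ constant on $[t_1,t_0]$. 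If $t_0\ge T_\star(u)$, this constancy extends to $[t_1,+\infty)$ and directly contradicts $t_1<T_\star(u)$.

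The main obstacle is the remaining case $t_0<T_\star(u)$, where (\ref{eq: 3}) only provides constancy on a bounded interval. I would handle it by building a shortcut solution $w$ via \ref{itm: 1} and \ref{itm: 2}: since $u^{t_0}(0)=u(t_0)=u(t_1)$, concatenating $u|_{[0,t_1]}$ with $u^{t_0}$ yields $w\in\UU$ satisfying $w(s)=u(s+t_0-t_1)$ for $s>t_1$; a direct check shows $\RR[w]=\RR[u]$, so $w\in\UU[\RR[u]]$ with $\theta=+\infty$. Applying the standing hypothesis to $w$ at each $s>t_1$ and using the monotonicity of $\Psi\circ u$ (since $s+t_0-t_1>s$) together with the injectivity of $\Psi$ on $\RR[u]$, I would deduce $u(s)=u(s+t_0-t_1)$ for every $s>t_1$. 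Combined with constancy on $[t_1,t_0]$, iterating this identity forces $u$ to be constant on $[t_1,+\infty)$, again contradicting $t_1<T_\star(u)$.
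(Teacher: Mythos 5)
Your proof is correct, and for the second statement it takes a genuinely different route from the paper. For the first statement the two arguments are essentially the same: the paper invokes \ref{itm: T3} of Theorem \ref{thm: minimal solution} (with a separate little computation for $t\geq T_\star(u)$), while you invoke \ref{itm: T1} to get $v\succ u$ and then unwind $\sfz(t)\le t$; these are two faces of the same fact. For the second statement the paper does \emph{not} go through \ref{itm: T5}: it argues directly from the definition of minimality — if $u\succ v$ then $u(t)=v(\sfz(t))$ with $\sfz(t)\le t$, so $\Psi(u(t))=\Psi(v(\sfz(t)))\ge\Psi(v(t))\ge\Psi(u(t))$, and injectivity of $\Psi$ (extended from $\RR[u]$ to $\RR[v]$ by a short auxiliary argument, since $\RR[v]$ may contain a limit point outside $\RR[u]$) gives $u=v$. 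Your route instead verifies the time-comparison criterion \ref{itm: T5} by contradiction, and the price is the plateau case $t_0<T_\star(u)$, which you correctly dispose of with the concatenated ``shortcut'' solution $w$ built from \ref{itm: 1} and \ref{itm: 2}, iterating $u(s)=u(s+t_0-t_1)$ together with (\ref{eq: 3}). What each approach buys: the paper's sandwich argument is shorter and needs no auxiliary construction, but must extend injectivity of $\Psi$ to $\RR[v]$; your argument only ever compares points of $\RR[u]$ itself, so it uses the injectivity hypothesis exactly as stated, and it showcases \ref{itm: T5} as the minimality criterion it was designed to be — at the cost of the extra shortcut construction and case analysis.
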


\begin{proof}
The first statement directly follows from \ref{itm: T3} in Theorem \ref{thm: minimal solution}: indeed, if $t\in[0, T_\star(u))$, then there exists $\bar{t}\geq0$ with $u(t) = v(\bar{t})$ and applying \ref{itm: T3} we obtain $\bar{t} \geq t$ and hence $\Psi(u(t)) = \Psi(v(\bar{t})) \leq \Psi(v(t))$; if $T_\star(u) < +\infty$ and $t\geq T_\star(u)$, then $\Psi(u(t)) = \min_{s\geq 0} \Psi(u(s)) \leq \inf_{s\geq 0} \Psi(v(s)) \leq \Psi(v(t))$.  

Now, we prove the second statement. Let $u\in\UU$ be given and assume that $\Psi$ is injective on $\RR[u]$ and that $\Psi(u(t)) \leq \Psi(v(t))$ for every $v\in\UU[\RR[u]]$ and $t\in[0, +\infty)$. We note that $\Psi$ is injective on $\RR[v]$ for every $v\in\UU[\RR[u]]$: just suppose that there exist $v\in\UU[\RR[u]]$ with $T_\star(v) < +\infty$ and $\bar{t} < T_\star(v)$ with $\Psi(v(\bar{t})) = \Psi(v(T_\star(v))) = \min_{s\geq 0} \Psi(v(s))$, then $\Psi\circ v$ is constant in the interval $[\bar{t}, T_\star(v)]$, in contradiction to $\bar{t} < T_\star(v)$ and $\Psi$ injective on $\RR[u]$. The claim follows.
Suppose now that $u\succ v$ for some $v\in\UU$. Then there exists an increasing $1$-Lipschitz map $\sfz: [0, +\infty) \to [0, +\infty)$ such that $u(t) = v(\sfz(t))$ for all $t\in[0, +\infty)$. It holds that $\sfz(t) \leq t$ for all $t\geq 0$ and $v\in\UU[\RR[u]]$. Hence, $\Psi(u(t)) = \Psi(v(\sfz(t))) \geq \Psi(v(t)) \geq \Psi(u(t))$ for all $t\in[0, +\infty)$. This yields $u(t) = v(t)$ for all $t\in[0, +\infty)$ and the proof is complete.    
\end{proof}

\begin{remark}
We do not make use of \ref{itm: C} and (\ref{eq: cont h}) in the proof of the second statement of Proposition \ref{prop: 1} (cf. Remark \ref{rem: C}). 
\end{remark}

\section{Minimal solutions to gradient flows}\label{sec: gf} 

It is known that gradient flows can be studied within the framework of generalized semiflows \cite{rossi2011global}. However, our approach to apply the theory of minimal solutions to gradient flows in metric spaces is independent of Section \ref{sec: generalized semiflows}. The special structure of the energy dissipation inequality allows of taking into consideration cases in which the gradient flow for a functional does not fit into the concept of generalized semiflow due to the lack of compactness but still is a generalized $\Lambda$-semiflow.  

We find a particular feature of the minimal solutions to a gradient flow: they cross the critical set
\begin{displaymath}
\{x\in\SS \ | \ g(x) = 0\}
\end{displaymath}
of the functional with respect to the corresponding upper gradient $g$ only in a negligible set of times before they possibly become eventually constant.
 
\subsection{Curves of maximal slope}\label{subsec: gf1}
We give some of the basic definitions concerning gradient flows in metric spaces, following 
the fundamental book by Ambrosio, Gigli and Savar\'e \cite{AmbrosioGigliSavare05}: 
 
Let $(\SS, d)$ be a complete metric space and let the notation $\stackrel{\SS}{\to}$ correspond to the convergence in the metric $d$, i.e.
\begin{displaymath}
x_j \stackrel{\SS}{\to} x \quad :\Leftrightarrow \quad d(x_j, x) \to 0 
\end{displaymath}
for $x_j, x\in \SS$. 

So-called curves of maximal slope are defined for an extended real functional $\phi: \SS \rightarrow (-\infty, +\infty]$ with proper effective domain 
\begin{equation*}
D(\phi) := \{\phi < +\infty\} \ \neq \emptyset.
\end{equation*}
The notion of curves of maximal slope goes back to \cite{DeGiorgiMarinoTosques80}, with further developments in \cite{Degiovanni-Marino-Tosques85}, \cite{MarinoSacconTosques89}. 

\paragraph{Locally absolutely continuous curve}
\begin{definition}\label{def: lac curve}
We say that a curve $v: \ [0, +\infty) \rightarrow \SS$ is locally absolutely continuous and write $v\in AC_{\text{loc}}([0, +\infty);\SS)$ if there exists $m\in L_{\text{loc}}^1(0,+\infty)$ such that 
\begin{equation*}
d(v(s),v(t)) \leq \int^{t}_{s}{m(r) \ dr} \quad \text{for all } 0 \leq s\leq t < +\infty. 
\end{equation*}\\
In this case, the limit
\begin{equation*}
|v'|(t) := \mathop{\lim}_{s\to t} \frac{d(v(s),v(t))}{|s-t|}
\end{equation*}
exists for $\LL^1$-a.e. $t\in (0, +\infty)$, the function $t \mapsto |v'|(t)$ belongs to $L_{\text{loc}}^1(0, +\infty)$ and is called the metric derivative of $v$. The metric derivative is $\LL^1$-a.e. the smallest admissible function $m$ in the definition above. 
\end{definition}

\paragraph{Strong upper gradient}
 
\begin{definition}\label{def: sug}
A function $g: \SS\to [0, +\infty]$ is a strong upper gradient for the functional $\phi$ if for every $v\in AC_{\text{loc}}([0,+\infty);\SS)$ the function $g\circ v$ is Borel and 
\begin{equation}\label{eq: sug}
|\phi(v(t)) - \phi(v(s))| \leq \int^{t}_{s}{g(v(r))|v'|(r) \ dr} \quad \text{for all } 0 \leq s\leq t < +\infty.
\end{equation}
In particular, if $g\circ v |v'| \in L_{\text{loc}}^1(0,+\infty)$ then $\phi \circ v$ is locally absolutely continuous and 
\begin{equation*}
|(\phi \circ v)'(t)| \leq g(v(t))|v'|(t) \quad \text{for } \LL^1\text{-a.e. } t\in (0,+\infty).
\end{equation*}
\end{definition}
This slightly modified version of [\cite{AmbrosioGigliSavare05}, Def. 1.2.1] (which requires (\ref{eq: sug}) only for $s > 0$) can be found in \cite{rossi2011global}. 
 
In \cite{AmbrosioGigliSavare05}, also the concept of \textit{weak upper gradient} is defined. The notion of upper gradient is an abstraction of the modulus of the gradient to a general metric and nonsmooth setting. 

\paragraph{Curve of maximal slope}
\begin{definition}\label{def: curve of maximal slope}
Let $g: \SS \to [0, +\infty]$ be a strong or weak upper gradient for the functional $\phi$. 

A locally absolutely continuous curve $u: [0, +\infty) \to \SS$ is called a curve of maximal slope for $\phi$ with respect to its upper gradient $g$ if $\phi\circ u$ is $\LL^1$-a.e. equal to a decreasing map $\varphi: [0, +\infty) \to \mathbb{R}$, i.e.
\begin{displaymath}
\phi(u(r)) = \varphi(r) \text{ for } \LL^1\text{-a.e. } r\geq 0, \quad \varphi(t) \leq \varphi(s) \text{ for all } 0\leq s < t <+\infty,
\end{displaymath}
and the energy dissipation inequality  
\begin{equation*}
\varphi(s) - \varphi(t) \ \geq \ \frac{1}{2} \int^{t}_{s}{g^2 (u(r)) \ dr} \ + \ \frac{1}{2} \int^{t}_{s}{|u'|^2 (r) \ dr}  
\end{equation*}  
is satisfied for all $0\leq s \leq t < +\infty$.
\end{definition}


\medskip
Typical candidates for $g$ are the \textit{local slope}
\begin{equation*}
|\partial\phi|(x) := \mathop{\limsup}_{d(y,x) \to 0} \frac{(\phi(x)-\phi(y))^+}{d(x,y)} \quad (x\in D(\phi)), 
\end{equation*} 
the \textit{relaxed slope}
\begin{equation*}
|\partial^- \phi|(x) := \inf  \left\{\mathop{\liminf}_{j\to \infty} |\partial\phi|(x_j): \ d(x_j,x)\to 0, \ \sup_{j} \phi(x_j) < +\infty \right\} 
\end{equation*}
and similar modifications of the lower semicontinuous envelope of the local slope \cite{AmbrosioGigliSavare05, RossiSavare06, Degiovanni-Marino-Tosques85, MarinoSacconTosques89, rossi2011global}. 
\begin{remark}
If $\SS = \mathbb{R}^d$ and $\phi: \mathbb{R}^d \to \mathbb{R}$ is a continuously differentiable Lipschitz function, then $g:=|\nabla\phi| = |\partial\phi| = |\partial^-\phi|$ is a strong upper gradient for $\phi$, the energy dissipation inequality is equivalent to the classical gradient flow equation
\begin{displaymath}
u'(t) \ = \ -\nabla\phi(u(t)), \quad t > 0, 
\end{displaymath}
and admits at least one solution for every initial value. 
\end{remark} 

\paragraph{Definition of $\UU(\phi, g)$}

It is usually not clear a priori whether a candidate function $g: \SS \to [0, +\infty]$ is an upper gradient or not (except that the local slope is a weak upper gradient \cite{AmbrosioGigliSavare05}). 

Our analysis of gradient flows with regard to our concept of generalized $\Lambda$-semiflow and minimal solutions will not rely on the behaviour of $g$ as a strong or weak upper gradient. Our considerations will concern locally absolutely continuous curves satisfying the energy dissipation inequality for some given function $g: \SS \to [0, +\infty]$ without specifying the role $g$ plays for the functional $\phi$.

In view of the concatenation hypothesis \ref{itm: 2}, we assume that the energy dissipation inequality holds everywhere for $\varphi = \phi\circ u$.

\begin{definition}\label{def: U(phi, g)}
Let $\phi: \SS \to (-\infty, +\infty]$ and $g: \SS \to [0, +\infty]$ be given. We define $\UU(\phi, g)$ as the family of all the locally absolutely continuous curves $u\in AC_{\text{loc}}([0, +\infty); \SS)$ with $u(0)\in D(\phi)$, satisfying the energy dissipation inequality 
\begin{equation}\label{eq: EDI'}
\phi(u(s)) - \phi(u(t)) \ \geq \ \frac{1}{2} \int^{t}_{s}{g^2 (u(r)) \ dr} \ + \ \frac{1}{2} \int^{t}_{s}{|u'|^2 (r) \ dr} 
\end{equation}
for all $0\leq s \leq t < +\infty$. 
\end{definition}

If $g$ is a weak or strong upper gradient for $\phi$ and $u\in\UU(\phi, g)$, then $u$ is a curve of maximal slope for $\phi$ w.r.t. $g$. 

\begin{remark}\label{rem: tacit}
In Definition \ref{def: U(phi, g)}, we tacitly assume that $g\circ u$ is Borel; otherwise the integral on the right-hand side would be set $+\infty$.
\end{remark}

\paragraph{Example of a nonempty family $\UU(\phi, g)$}

The following existence result is provided in \cite{AmbrosioGigliSavare05}, the proof of which is based on the notion of minimizing movements \cite{DeGiorgi93}: Suppose that the functional $\phi: \SS\to (-\infty, +\infty]$ is lower semicontinuous, i.e.
\begin{equation}\label{eq: AGS lsc}
d(x_j, x) \to 0 \quad  \Rightarrow \quad \mathop{\liminf}_{j\to \infty} \phi(x_j) \geq \phi(x),
\end{equation}
quadratically bounded from below, i.e. there exist $A, B > 0, \ x_{\star}\in \SS$ such that 
\begin{equation}\label{eq: AGS coercive}  
\phi(\cdot) \geq -A -B d^2(\cdot, x_{\star}),
\end{equation}
and suppose that $d$-bounded subsets of a sublevel of $\phi$ are relatively compact, i.e. 
\begin{equation}\label{eq: AGS compact}
\sup_{j,l}\{d(x_j,x_l), \phi(x_j)\} < +\infty \ \Rightarrow \ \exists \ j_k \uparrow +\infty, \ x\in\SS: \ d(x_{j_k}, x) \to 0. 
\end{equation}
Further, suppose that $g:= |\partial^-\phi|$ is a strong upper gradient for $\phi$. Then the following holds \cite{AmbrosioGigliSavare05}: for every $u_0\in D(\phi)$, there exists at least one curve $u$ of maximal slope for $\phi$ w.r.t. $|\partial^-\phi|$, with initial value $u(0) = u_0$, the energy dissipation inequality (\ref{eq: EDI'}) holds (in fact, equality holds in (\ref{eq: EDI'})) and $u\in\UU(\phi, |\partial^-\phi|)$. 


\begin{remark}\label{rem: sug}
Whenever $g: \SS\to[0, +\infty]$ is a strong upper gradient for a functional $\phi: \SS \to (-\infty, +\infty]$, and there exists a curve $u$ of maximal slope for $\phi$ w.r.t. $g$, it follows from Definition \ref{def: sug} of strong upper gradient that $u\in\UU(\phi, g)$ (with equality in (\ref{eq: EDI'})) and $\phi\circ u$ is locally absolutely continuous. 

The family $\UU(\phi, g)$ then coincides with the collection of all the curves of maximal slope for $\phi$ w.r.t. $g$.
\end{remark}

\subsection{Gradient flow as generalized $\Lambda$-semiflow} \label{sec: 42}

We want to prove that $\UU(\phi, g)$ is a generalized $\Lambda$-semiflow: 

\begin{theorem}\label{thm: gf as gls}
Let $\phi: \SS\to (-\infty, +\infty]$ and $g: \SS\to [0, +\infty]$ be given. 
We assume that $\phi$ and $g$ are lower semicontinuous, i.e.
\begin{equation}\label{eq: phi g lsc}
d(x_j, x)\to 0 \quad \Rightarrow \quad \liminf_{j\to+\infty} \phi(x_j) \geq \phi(x), \quad \liminf_{j\to+\infty} g(x_j)\geq g(x), 
\end{equation}
and $\phi$ is quadratically bounded from below, i.e. there exist $A, B > 0, \ x_\star\in\SS$ such that
\begin{equation}\label{eq: phi quadratic}
\phi(\cdot) \geq -A -Bd^2(\cdot, x_\star), 
\end{equation}
and we suppose that $\UU(\phi, g)\neq \emptyset$. Then $\UU(\phi, g)$ is a generalized $\Lambda$-semiflow, according to Definition \ref{def: sigma semiflow}. 
\end{theorem}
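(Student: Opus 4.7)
The plan is to verify the five hypotheses (H1)--(H5) of Definition 3.1 one by one for $\UU(\phi, g)$. (H1), (H2) and (H5) are essentially direct consequences of the fact that local absolute continuity and the energy dissipation inequality (4.2) are stable under time translation, concatenation and restriction-extension: for (H2), splitting the integral at $\bar t$ and using $\phi(u(\bar t)) = \phi(v(0))$ reduces the inequality for the concatenation to adding two EDIs; (H5) is immediate once one observes that for every $0 \le s \le t$ and every $T \ge t$, an extension $w_T \in \UU(\phi, g)$ of $w|_{[0,T]}$ witnesses both the local absolute continuity and the EDI at the pair $(s,t)$.

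For (H3), by Remark 3.4 it is enough to prove (3.1). If $u(s) = u(t)$, then $\phi\circ u$ is decreasing on $[0,+\infty)$ by the EDI and thus constant on $[s,t]$; applying the EDI on any sub-interval $[s',t'] \subset [s,t]$ gives $0 \ge \tfrac{1}{2}\int_{s'}^{t'} |u'|^2$, so $|u'| = 0$ almost everywhere on $[s,t]$, and $u$ is constant there by absolute continuity.

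The main work is (H4), which I would handle in three steps. First I would show that $w$ extends continuously at $\theta$ to a limit $w_\star$: combining the quadratic lower bound (4.6) with the EDI for the extensions $w_T$, a short bootstrap on short time steps (estimate $d(w(0),w(r))^2 \le 2r(\phi(w(0)) - \phi(w(r)))$ via Cauchy--Schwarz, then feed the quadratic bound back through the triangle inequality) gives $d(w(r),x_\star)$ bounded uniformly on $[0,\theta)$, hence $\phi\circ w$ is bounded below there, the EDI yields $\int_0^\theta |w'|^2 + \int_0^\theta g^2(w) < \infty$, and Cauchy--Schwarz turns the first into uniform continuity of $w$, producing $w_\star$. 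Next I would prove $\lim_{t\uparrow +\infty} u(t) = w_\star$ using (H3) already established: since $\RR[u] = w([0,\theta))$, there is a non-decreasing parametrization $\zeta : [0,+\infty) \to [0,\theta)$ with $u(t) = w(\zeta(t))$; its monotone limit $\bar\theta \le \theta$ necessarily satisfies $w(\bar\theta) = w_\star$, because if $\bar\theta < \theta$ then every $w(s)$ with $s\in(\bar\theta,\theta)$ is revisited at an earlier parameter, and (H3) applied to an extension of $w|_{[0,T]}$ forces $w$ to be constant on $[\bar\theta,\theta)$.

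The step I expect to be the main obstacle is the third: showing $g(w_\star) = 0$. This is forced because the constant-extended $\bar w$ on $[\theta,+\infty)$ must satisfy the EDI, whose right-hand side reduces to $\tfrac{1}{2}(t-s)g^2(w_\star)$. I would argue by contradiction: if $g(w_\star) > 0$, lower semicontinuity of $g$ gives a neighborhood of $w_\star$ on which $g \ge c > 0$; by the previous step, $u(t)$ eventually stays in this neighborhood, so $g(u(t)) \ge c$ for all large $t$; but $\phi$ is bounded below on the compact set $w([0,\theta])$ (lsc on a compact image), hence on $\RR[u]$, so the EDI for $u$ gives $\int_0^\infty g^2(u) < \infty$, contradicting $g(u(\cdot)) \ge c$ on a half-line. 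With $g(w_\star)=0$ in hand, the EDI for $\bar w$ is assembled as follows: on $[0,\theta)$ it is inherited from the extensions of $w|_{[0,T]}$; across $\theta$, since $\phi\circ w$ is decreasing its limit exists, and $\phi(w_\star) \le \liminf_{s\uparrow\theta}\phi(w(s))$ by lsc of $\phi$ allows one to pass to the limit in the EDI of $w$; on $[\theta,+\infty)$ the inequality becomes $0 \ge \tfrac{1}{2}(t-s)g^2(w_\star) = 0$. Local absolute continuity of $\bar w$ follows from $\int_0^\theta |w'| < \infty$ (Cauchy--Schwarz) plus the constant tail.
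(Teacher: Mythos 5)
Your proposal is correct and follows essentially the same route as the paper: (H1), (H2), (H5) by direct manipulation of the energy dissipation inequality, (H3) via the monotonicity of $\phi\circ u$, and for (H4) the quadratic lower bound to get an a priori bound on $d(w(\cdot),x_\star)$, hence $\int_0^\theta |w'|^2<\infty$ and existence of $w_\star$ by completeness, then (H3) to identify $\lim_{t\to\infty}u(t)=w_\star$, integrability of $g^2\circ u$ plus lower semicontinuity to get $g(w_\star)=0$, and lower semicontinuity of $\phi$ to pass to the limit in the EDI. The only cosmetic difference is that you obtain the uniform bound by a short-time bootstrap where the paper applies Gronwall's inequality to $\xi(t)=\phi(w(t))+2Bd^2(w(t),x_\star)+A$; both work.
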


\begin{proof}
We first note that if $g$ is lower semicontinuous, then $g\circ u$ is Borel for every curve $u\in AC_{\text{loc}}([0, +\infty); \SS)$. 

The hypothesis \ref{itm: 1} follows by the classical change of variables formula: if $u\in\UU(\phi, g)$ and $\tau\geq 0$, then $u_\tau(\cdot) := u(\cdot + \tau) \in AC_{\text{loc}}([0, +\infty);\SS)$ with metric derivative $|u_\tau'|(\cdot) = |u'|(\cdot + \tau)$ and 
\begin{eqnarray*}
\phi(u_\tau(s)) - \phi(u_\tau(t)) &\geq& \frac{1}{2}\int_{s+\tau}^{t+\tau}{g^2(u(r)) \ dr} + \frac{1}{2}\int_{s+\tau}^{t+\tau}{|u'|^2(r) \ dr} \\
&\geq& \frac{1}{2}\int_{s}^{t}{g^2(u_\tau(r)) \ dr} + \frac{1}{2}\int_{s}^{t}{|u_\tau'|^2(r) \ dr}. 
\end{eqnarray*} 

Similarly, we show \ref{itm: 2}. Let $u, v\in\UU(\phi, g)$ with $v(0) = u(\bar{t})$ for some $\bar{t}\geq 0$ and define $w: [0, +\infty) \to \SS$, 
\begin{displaymath}
w(t) := \begin{cases}
u(t) &\text{ if } t\leq \bar{t} \\
v(t-\bar{t}) &\text{ if } t > \bar{t}
\end{cases}
\end{displaymath} 
Clearly, $w\in AC_{\text{loc}}([0, +\infty); \SS)$ with 
\begin{displaymath}
|w'|(r) = \begin{cases}
|u'|(r) &\text{ if } r\leq \bar{t} \\
|v'|(r-\bar{t}) &\text{ if } r > \bar{t}
\end{cases}
\end{displaymath}
and the energy dissipation inequality (\ref{eq: EDI'}) directly follows for $0\leq s \leq t \leq \bar{t}$ and by change of variable as above, for $\bar{t} \leq s \leq t < +\infty$. If $0\leq s < \bar{t} < t$, we obtain (\ref{eq: EDI'}) by splitting up
\begin{displaymath}
\phi(w(s)) - \phi(w(t)) \ = \ \phi(w(s)) - \phi(w(\bar{t})) + \phi(w(\bar{t})) - \phi(w(t)). 
\end{displaymath} 
This shows \ref{itm: 2}. 

Now, let a map $u: [0, +\infty) \to \SS$ be given with the property that $u|_{[0, T]}$ can be extended to a map in $\UU(\phi, g)$ for all $T > 0$, i.e. for every $T > 0$ there exists $w_T \in \UU(\phi, g)$ with 
$w_T(t) = 
u(t) \text{ if } t\leq T$. 
In particular, it holds that $u\in AC_{\text{loc}}([0, +\infty); \SS)$ and $|w_T'|(\cdot) = |u'|(\cdot)$ in $(0, T)$. Hence, $u\in\UU(\phi, g)$. This shows that $\UU(\phi, g)$ satisfies \ref{itm: 5}.    

Obviously, $\UU(\phi, g)$ satisfies \ref{itm: 3}. 

\medskip
It remains to prove \ref{itm: 4}. Let $u\in\UU(\phi, g)$. Suppose that there exists a map $w: [0, \theta) \to \SS$ with $\theta < +\infty$ and $w([0, \theta)) = \RR[u]$ such that $w|_{[0, T]}$ can be extended to a map in $\UU(\phi, g)$ for all $T\in[0, \theta)$. Then $w\in AC([0, T]; \SS)$ for every $T \in (0, \theta)$, i.e. the metric derivative 
\begin{displaymath}
|w'|(t):= \mathop{\lim}_{s\to t} \frac{d(w(s),w(t))}{|s-t|}
\end{displaymath} 
exists for $\LL^1$-a.e. $t\in(0, \theta)$ and $t\mapsto |w'|(t)$ belongs to $L^1([0, T]; \SS)$ for every $T\in(0, \theta)$, and 
\begin{displaymath}
d(w(s),w(t)) \leq \int^{t}_{s}{|w'|(r) \ dr} \quad \text{for all } 0 \leq s\leq t < \theta. 
\end{displaymath}
Moreover, $w(0) \in D(\phi)$ and the energy dissipation inequality 
\begin{equation}\label{eq: EDI w}
\phi(w(s)) - \phi(w(t)) \ \geq \ \frac{1}{2} \int_s^t{g^2(w(r)) \ dr} \ + \ \frac{1}{2}\int_s^t{|w'|^2(r) \ dr}
\end{equation}
holds for all $0\leq s \leq t < \theta$. By assumption, there exist $A, B > 0, \ x_\star\in\SS$ such that
\begin{displaymath}
\phi(\cdot) \geq -A - B d^2(\cdot, x_\star).
\end{displaymath}
We set 
\begin{displaymath}
\xi(t):= \phi(w(t)) + 2B d^2(w(t), x_\star) + A \quad \text{for } t\in[0, \theta).
\end{displaymath} 
It holds that $\xi$ is nonnegative and Borel (since $\phi$ is lower semicontinuous) and for every $t\in[0, \theta)$, the map $\xi$ is bounded from above in $[0, t]$ and  
\begin{eqnarray*}
\xi(t) &\leq& \xi(0) - \frac{1}{2} \int_0^t{|w'|^2(r) \ dr}  + 4B \int_0^t{d(w(r), x_\star)|w'|(r) \ dr} \\
&\leq& \xi(0) + 8B^2 \int_0^t{d^2(w(r), x_\star) \ dr} \\
&\leq& \xi(0) + 8B\int_0^t{\xi(r) \ dr}. 
\end{eqnarray*}
We used the fact that $[0, t] \ni r \mapsto d^2(w(r), x_\star)$ is absolutely continuous due to the chain rule for BV functions [\cite{Ambrosio-Fusco-Pallara00}, Thm. 3.99]: indeed, the map $[0, t] \ni r\mapsto \eta(r):=d(w(r), x_\star)$ is absolutely continuous with
\begin{displaymath}
|\eta(r_2) - \eta(r_1)| \leq d(w(r_1), w(r_2)) \leq \int_{r_1}^{r_2}{|w'|(r) \ dr} \quad \text{ for } 0\leq r_1 \leq r_2 \leq t 
\end{displaymath}
and bounded in $[0, t]$, so we may apply [\cite{Ambrosio-Fusco-Pallara00}, Thm. 3.99] to $\eta^2$ and obtain
\begin{displaymath}
|d^2(w(r_2), x_\star) - d^2(w(r_1), x_\star)| \leq \int_{r_1}^{r_2}{2 d(w(r), x_\star)|w'|(r) \ dr} \ (0\leq r_1 \leq r_2 \leq t). 
\end{displaymath}

\medskip
Applying the integral form of Gronwall's inequality (see e.g. [\cite{Evans2010}, Appendix B]) to $\xi$, we obtain
\begin{equation}\label{eq: xi}
\xi(t) \ \leq \ \xi(0)(1+ 8Bt e^{8Bt}) \ \leq \ \underbrace{\xi(0)(1+8B\theta e^{8B\theta})}_{=: \xi_{0, \theta}} \quad \text{ for all } t\in[0, \theta). 
\end{equation}

In particular, 
\begin{equation}\label{eq: w bounded}
Bd^2(w(t), x_\star) \ \leq \ \xi(t) \ \leq \ \xi_{0, \theta}, \quad \phi(w(t)) \ \geq \ -A - B\xi_{0, \theta} \ > \ -\infty
\end{equation}
for all $t\in[0, \theta)$. By H\"older inequality, it follows from (\ref{eq: EDI w}) and (\ref{eq: w bounded}) that 
\begin{displaymath}
d(w(s), w(t)) \ \leq \ (t-s)^{\frac{1}{2}} (2\phi(w(0)) + 2A + 2B\xi_{0, \theta})^{\frac{1}{2}}  \quad \text{for all } 0\leq s \leq t < \theta. 
\end{displaymath}
Since $\SS$ is complete, this shows that the limit $\lim_{t\uparrow \theta} w(t) =: w_\star\in\SS$ exists.

If $T_\star(u) < +\infty$, then $u(t) = w_\star$ for all $t\in[T_\star(u), +\infty)$ and there exists $T\in[0, \theta)$ such that $w(t) = w_\star$ for all $T\leq t < \theta$; nothing remains to be shown in this case. 

If $T_\star(u) = +\infty$ and $S_n\uparrow +\infty$, there exists a corresponding increasing sequence $T_n\uparrow \theta$ with $w(T_n) = u(S_n)$; this follows from \ref{itm: 3} (cf. proof of Theorem \ref{thm: ms gs}). So we obtain
\begin{displaymath}
d(u(t_n), w_\star) \to 0 \quad \text{whenever } t_n \to +\infty. 
\end{displaymath}
Moreover, since $u\in\UU(\phi, g)$ satisfies the energy dissipation inequality (\ref{eq: EDI'}) and $\inf_{t\geq 0} u(t) = \inf_{t\in[0, \theta)}w(t) > -\infty$ by (\ref{eq: w bounded}), it holds that 
\begin{displaymath}
\int_0^{+\infty}{g^2(u(r)) \ dr} < +\infty. 
\end{displaymath}
Hence, $\liminf_{r\to+\infty} g(u(r)) = 0$ and we obtain
\begin{equation}\label{eq: g(w_star) = 0}
g(w_\star) = 0
\end{equation}
by the lower semicontinuity of $g$. Further, for $s\in[0, \theta)$, the energy dissipation inequality
\begin{equation}\label{eq: EDI w_star}
\phi(w(s)) - \phi(w_\star) \ \geq \ \frac{1}{2} \int_s^\theta{g^2(w(r)) \ dr} \ + \ \frac{1}{2}\int_s^\theta{|w'|^2(r) \ dr}
\end{equation}
follows from (\ref{eq: EDI w}) and the lower semicontinuity of $\phi$. 

We define $\bar{w}: [0, +\infty) \to \SS$, 
\begin{displaymath}
\bar{w}(t):= \begin{cases}
w(t) &\text{ if } t < \theta \\
w_\star &\text{ if } t\geq \theta
\end{cases}
\end{displaymath}
Clearly, $\bar{w}\in AC_{\text{loc}}([0, +\infty); \SS)$, and by (\ref{eq: EDI w}), (\ref{eq: EDI w_star}) and (\ref{eq: g(w_star) = 0}), it holds that $\bar{w}\in\UU(\phi, g)$. 

The proof is complete. 
\end{proof}

The assumptions (\ref{eq: phi g lsc}) and (\ref{eq: phi quadratic}) on $\phi$ and $g$ in Theorem \ref{thm: gf as gls} are only used in the proof of \ref{itm: 4}. The lower semicontinuity hypotheses on $\phi$ and $g$ allow the passage to the limit in the energy dissipation inequality and are natural assumptions whenever some kind of limit behaviour concerning the energy dissipation inequality is of interest (cf. the long-time analysis for gradient flows in \cite{rossi2011global}). This will again be the case in the proof of \ref{itm: C} in Section \ref{sec: 43}. 

We note that we do not need to require any compactness property of $\phi$ such as (\ref{eq: AGS compact}); the quadratic bound (\ref{eq: phi quadratic}) from below suffices for our purposes. 

Also the existence of minimal solutions will be proved without assuming any compactness property of $\phi$. 

\subsection{Minimal gradient flow} \label{sec: 43}

In this section, we study minimal solutions to gradient flows. Our aim is to apply Theorem \ref{thm: minimal solution} and Proposition \ref{prop: 1} to $\UU = \UU(\phi, g)$, providing existence and characteristics of minimal solutions. Moreover, we will see that minimal solutions $u\in\UU_{\text{min}}$ to gradient flows are characterized by the particular property that
\begin{displaymath}
\LL^1(\{t\in[0, T_\star(u)) \ | \ g(u(t)) = 0\}) = 0. 
\end{displaymath}  

\medskip
Let $\phi: \SS\to (-\infty, +\infty]$ and $g:\SS\to [0, +\infty]$ be given. Throughout this section, we assume that $\phi$ and $g$ are lower semicontinuous (\ref{eq: phi g lsc}) 
and $\phi$ is quadratically bounded from below (\ref{eq: phi quadratic}), 
and we define $\UU(\phi, g)$ as in Definition \ref{def: U(phi, g)}. Due to Theorem \ref{thm: gf as gls}, the family $\UU(\phi, g)$ is a generalized $\Lambda$-semiflow on $\SS$ (provided it is nonempty). 

\medskip
We want to prove that $\UU(\phi, g)$ satisfies \ref{itm: C}. The critical point is a passage to the limit in the energy dissipation inequality, as in the proof of \ref{itm: 4}. The passage to the limit will now concern both terms on the left-hand side of the energy dissipation inequality (\ref{eq: EDI'}) so that the lower semicontinuity of $\phi$ will not suffice. Such obstacles are usually overcome by assuming that $g$ is a strong upper gradient for $\phi$ or by allowing any decreasing function $\varphi \geq \phi\circ u$ in a modified energy dissipation inequality with pairs $(u, \varphi)$ as solutions (cf. \cite{rossi2011global}). 

For our purposes, it is sufficient to assume that $\phi\circ u: [0, +\infty) \to \mathbb{R}$ is continuous for every solution $u\in\UU(\phi, g)$. This is satisfied e.g. if $g$ is a strong upper gradient (cf. Remark \ref{rem: sug}).  

\begin{theorem}\label{thm: 59}
Let the assumptions of Theorem \ref{thm: gf as gls} be satisfied and suppose that 
\begin{equation}\label{eq: phicircu}
\phi\circ u: [0, +\infty) \to \mathbb{R} \quad \text{is continuous for every} \quad u\in\UU(\phi, g). 
\end{equation}

Then the generalized $\Lambda$-semiflow $\UU(\phi, g)$ satisfies \ref{itm: C} and all the statements \ref{itm: T1} - \ref{itm: T5} of Theorem \ref{thm: minimal solution} hold good for $\UU(\phi, g)$. Moreover, both statements of Proposition \ref{prop: 1} are applicable to $\UU(\phi, g)$.  
\end{theorem}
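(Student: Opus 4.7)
The bulk of the work will consist in verifying the compactness hypothesis \ref{itm: C} for $\UU(\phi, g)$; once that is in hand, the statements \ref{itm: T1}--\ref{itm: T5} follow directly from Theorem \ref{thm: minimal solution}, as $\UU(\phi, g)$ is already a generalized $\Lambda$-semiflow by Theorem \ref{thm: gf as gls} and every element is absolutely continuous, hence sequentially continuous; and the two statements of Proposition \ref{prop: 1} apply with the choice $\Psi := \phi$, which decreases along every solution by the very definition of $\UU(\phi, g)$.

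For \ref{itm: C}, the plan is to consider a sequence $v_n \in \TT[\UU(\phi, g)]$ with $\rho_n := \rho(v_n) \leq \rho_\star := \sup_n \rho(v_n) < +\infty$ and $\RR[v_n] = \RR[v_1]$, writing $v_1 = u_1(\cdot \wedge T_1)$ with $u_1 \in \UU(\phi, g)$ and setting $T_1^\star := T_1 \wedge T_\star(u_1)$. The first step is to derive the uniform a priori bound
\begin{displaymath}
\int_0^{\rho_n} \bigl(|v_n'|^2(r) + g^2(v_n(r))\bigr)\, dr \ \leq\ 2\bigl(\phi(v_1(0)) - m\bigr) =: M,
\end{displaymath}
where $m > -\infty$ is the minimum of $\phi$ on the compact set $\RR[v_1] = u_1([0, T_1^\star])$, finite by the quadratic lower bound (\ref{eq: phi quadratic}). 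Cauchy--Schwarz then yields uniform $1/2$-Hölder continuity of the $v_n$, and since they take values in the compact set $\RR[v_1]$, Arzelà--Ascoli together with a diagonal argument produces a subsequence $v_{n_k}$ converging uniformly on compact subsets of $[0, +\infty)$ to a continuous limit $v: [0, +\infty) \to \RR[v_1]$; along a further subsequence I may assume $\rho_{n_k} \to \rho_\infty \in [0, \rho_\star]$.

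Next I would identify $\RR[v] = \RR[v_1]$: the inclusion $\subset$ is immediate from the compactness of $\RR[v_1]$, and the reverse inclusion uses that each $x \in \RR[v_1]$ equals $v_{n_k}(t_{k,x})$ for some $t_{k,x} \in [0, \rho_{n_k}]$ and that any cluster point $t_x$ of $(t_{k,x})$ satisfies $v(t_x) = x$ by equicontinuity. Then I would pass to the limit in the EDI: the hypothesis (\ref{eq: phicircu}) that $\phi \circ u_1$ is continuous, combined with compactness of $[0, T_1^\star]$ and a subsequence argument, shows that $\phi$ is continuous on $\RR[v_1]$, whence $\phi(v_{n_k}(s)) \to \phi(v(s))$ for every $s$. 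Fatou together with the lower semicontinuity of $g$ gives $\liminf_k \int_s^t g^2(v_{n_k}) \geq \int_s^t g^2(v)$, and the classical lower semicontinuity of the $L^2$-norm of metric derivatives under pointwise convergence of curves (via weak $L^2$-compactness of $(|v_{n_k}'|)_k$, and the inequality $d(v(s), v(t)) \leq \int_s^t m^\star(r)\, dr$ for the weak limit $m^\star$, forcing $|v'| \leq m^\star$ a.e.) yields $\liminf_k \int_s^t |v_{n_k}'|^2 \geq \int_s^t |v'|^2$. Consequently $v$ satisfies the EDI on $[0, \rho_v]$ with $\rho_v := \rho(v) \leq \rho_\infty$.

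Finally, to exhibit $v \in \TT[\UU(\phi, g)]$, I would choose $s_v \in [0, T_1^\star]$ with $u_1(s_v) = v(\rho_v)$ and define $u: [0, +\infty) \to \SS$ by $u(t) := v(t)$ on $[0, \rho_v]$ and $u(t) := u_1(t - \rho_v + s_v)$ on $[\rho_v, +\infty)$; a direct verification of the EDI for $u$ on an arbitrary interval $[s, t]$ (splitting at $\rho_v$ when necessary and invoking the EDI for $v$ on $[0, \rho_v]$ together with the EDI for $u_1$ after translation) gives $u \in \UU(\phi, g)$ and $v = u(\cdot \wedge \rho_v)$. The principal obstacle I foresee is the passage to the limit in the EDI, and in particular the identification of $\phi|_{\RR[v_1]}$ as continuous: this step is precisely where the continuity hypothesis (\ref{eq: phicircu}) becomes indispensable, since the lower semicontinuity assumed in Theorem \ref{thm: gf as gls} alone would not be enough to track $\phi(v_{n_k}(s)) - \phi(v_{n_k}(t))$ in the limit.
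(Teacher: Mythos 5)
Your proposal is correct and follows essentially the same route as the paper's proof of \ref{itm: C}: the a priori bound on $\int |v_n'|^2$ from the energy dissipation inequality, compactness of $\RR[v_1]$ plus (a refined) Ascoli--Arzel\`a to extract a pointwise/locally uniform limit $v$, the continuity hypothesis (\ref{eq: phicircu}) to pass to the limit on the left-hand side of the EDI, Fatou with the lower semicontinuity of $g$ and the standard weak-$L^2$ argument for the metric derivative on the right-hand side, and finally concatenation with a translate of $u_1$ to exhibit $v$ as a truncated solution. The only point where you diverge is the identification $\RR[v]=\RR[v_1]$, which you obtain by a direct cluster-point argument using uniform convergence, whereas the paper deduces it from the injectivity of $\phi$ on $\RR[v_1]$ together with the intermediate value theorem for the continuous map $\phi\circ v$; both are valid.
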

\begin{proof}
We write $\UU=\UU(\phi, g)$. Let a sequence $v_n\in\TT[\UU], \ n\in\mathbb{N},$ be given with $\sup_n\rho(v_n) < +\infty$ and $\RR[v_n] = \RR[v_1]$ for all $n\in\mathbb{N}$. Since the truncated solution $v_1$ is continuous with $T_1:=\rho(v_1) < +\infty$, its range $\RR[v_1]$ is sequentially compact. Furthermore, it is straightforward to check that
\begin{displaymath}
\sup_{n\in\mathbb{N}}\int_0^{+\infty}{|v_n'|^2(r) \ dr} \ \leq \ 2(\phi(v_1(0)) - \phi(v_1(T_1))) \ < \ +\infty.   
\end{displaymath} 
Applying a refined version of Ascoli-Arzel\`a theorem [\cite{AmbrosioGigliSavare05}, Prop. 3.3.1], we obtain that there exist a subsequence $n_k\uparrow +\infty$ and a curve $v: [0, +\infty) \to \SS$ such that 
\begin{equation}\label{eq: 111}
v_{n_k}(t) \stackrel{\SS}{\to} v(t) \quad \text{for all } t\in[0, +\infty). 
\end{equation}
It is not difficult to see that $v\in AC_{\text{loc}}([0, +\infty); \SS)$ and 
\begin{equation}\label{eq: 112}
\int_{s}^{t}{|v'|^2 (r) \ dr} \ \leq \ \liminf_{k\to +\infty}\int_s^t{|v_{n_k}'|^2(r) \ dr} \quad \text{for all } 0\leq s\leq t <+\infty. 
\end{equation}

We may assume w.l.o.g. that $T_{n_k} := \rho(v_{n_k}) \to T$ for some $T\in[0, +\infty)$. For every $t\in[0, +\infty)$, there exists a sequence of times $t_k\in[0, T_1]$ such that $v_1(t_k) = v_{n_k}(t)$. It follows from this and from (\ref{eq: 111}) and (\ref{eq: phicircu}) that $\RR[v]\subset \RR[v_1]$ and 
\begin{displaymath}
\phi(v_{n_k}(t)) \to \phi(v(t)) \quad \text{for all } t\in[0, +\infty), \quad v(t) = v_1(T_1) \quad \text{for all } t\geq T.   
\end{displaymath} 
We obtain 
\begin{eqnarray*}
\phi(v(s)) - \phi(v(t)) &=& \lim_{k\to +\infty} (\phi(v_{n_k}(s)) - \phi(v_{n_k}(t))) \\
&\geq& \liminf_{k\to +\infty} \frac{1}{2}\int_s^t{g^2(v_{n_k}(r)) \ dr} + \liminf_{k\to+\infty} \frac{1}{2} \int_s^t{|v_{n_k}'|^2(r) \ dr} \\
&\geq& \frac{1}{2}\int_s^t{g^2(v(r)) \ dr} + \frac{1}{2} \int_s^t{|v'|^2(r) \ dr} 
\end{eqnarray*}
for all $0\leq s\leq t < T$, due to the fact that $v_{n_k}$ satisfies (\ref{eq: EDI'}) in $[0, T_{n_k}]$ and due to (\ref{eq: 112}), the lower semicontinuity (\ref{eq: phi g lsc}) of $g$ and Fatou's lemma.

Since $v$ is continuous, $\RR[v]\subset\RR[v_1]$ and (\ref{eq: phicircu}) holds, the map $\phi\circ v$ is continuous. It follows that $\RR[v] = \RR[v_1]$ since $v(0) = v_1(0)$, $v(T) = v_1(T_1)$, $\phi(\RR[v_1]) = [\phi(v_1(T_1)), \phi(v_1(0))]$ and $\phi$ injective on $\RR[v_1]$ (cf. Remark \ref{rem: injective}); further, the energy dissipation inequality
\begin{displaymath}
\phi(v(s)) - \phi(v(t)) \ \geq \ \frac{1}{2}\int_s^t{g^2(v(r)) \ dr} + \frac{1}{2} \int_s^t{|v'|^2(r) \ dr}
\end{displaymath}   
holds for all $0\leq s \leq t \leq T$. 

Now, let $\bar{v}_1\in\UU$ such that $v_1(\cdot) = \bar{v}_1(\cdot\wedge T_1)$. Similar arguments as in the proof of Theorem \ref{thm: gf as gls}, \ref{itm: 2}, show that $\bar{v}\in\UU$, where $\bar{v}: [0, +\infty) \to \SS$ is defined as 
\begin{displaymath}
\bar{v}(t) := \begin{cases}
v(t) &\text{ if } 0\leq t \leq T \\
\bar{v}_1(t-T + T_1) &\text{ if } t > T
\end{cases}
\end{displaymath}
Hence $v\in\TT[\UU]$. The proof of \ref{itm: C} is complete. 
\end{proof}
\begin{remark}\label{rem: injective}
Proposition \ref{prop: 1} is applicable with $\psi:= \phi$; it is true that $\phi$ may take the value $+\infty$ but for the statements of Proposition \ref{prop: 1} to hold good, it suffices that $\phi(u(t))\leq \phi(u(s)) < +\infty$ for all $0\leq s\leq t < +\infty$, $u\in\UU(\phi, g)$. We note that $\phi$ is injective on $\RR[u]$ for every $u\in\UU(\phi, g)$. To be more precise: 
The energy dissipation inequality (\ref{eq: EDI'}) implies that for every $u\in\UU(\phi, g)$ and $0\leq s \leq t < +\infty$ the following four points are equivalent: 
\begin{enumerate}[label={ ({\roman*}) }]
\item $\phi(u(s)) = \phi(u(t))$, 
\item $|u'|(r) = 0$ for $\LL^1$-a.e. $r\in(s, t)$, 
\item $u(r) = u(s)$ for all $r\in[s, t]$, 
\item $u(s) = u(t)$. 
\end{enumerate}
\medskip
Moreover, we note that for $\UU=\UU(\phi, g)$ and the range $\RR=\RR[y]\subset \SS$ of a solution $y\in\UU$, it holds that 
\begin{displaymath}
w\in\UU[\RR] \quad \Leftrightarrow \quad w\in\UU, \ \RR\subset \RR[w] \subset \overline{\RR}. 
\end{displaymath}
\end{remark}

\medskip
If $g$ is a strong upper gradient for $\phi$, then $\UU(\phi, g)$ coincides with the collection of all the curves of maximal slope for $\phi$ w.r.t. $g$ (cf. Remark \ref{rem: sug}). 
The next proposition deals with a special quality of minimal solutions to a gradient flow in terms of the $0$ level set of the corresponding strong upper gradient. 
\begin{proposition}\label{prop: cross critical set} (cf. [\cite{sf2017}, Thm. 3.4 (5)])
Let the assumptions of Theorem \ref{thm: gf as gls} be satisfied and suppose that $g$ is a strong upper gradient for $\phi$. Then the following two statements are equivalent for a solution $u\in\UU(\phi, g)$: 
\begin{enumerate}[label=({\roman*})]
\item $u$ is minimal, \label{itm: prop i}
\item \label{itm: prop ii} $u$ crosses the set $\{x\in\SS \ | \ g(x) = 0\}$ of critical points of $\phi$ w.r.t. its upper gradient $g$ in an $\LL^1$-negligible set of times, i.e. 
\begin{equation}\label{eq: prop ii}
\LL^1(\{t\in[0, T_\star(u)): \ g(u(t)) = 0\}) = 0. 
\end{equation}
\end{enumerate} 
\end{proposition}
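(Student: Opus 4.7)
The plan is to leverage the fact that when $g$ is a strong upper gradient for $\phi$, every $u\in\UU(\phi,g)$ satisfies, in addition to the EDI (\ref{eq: EDI'}), the sharpened identities
\[
|u'|(r) \ = \ g(u(r)) \ \text{for }\LL^1\text{-a.e.\ }r\ge 0, \qquad \phi(u(s))-\phi(u(t)) \ = \ \int_s^t g^2(u(r))\,dr \quad (0\le s\le t).
\]
This is the standard Young-inequality argument: combining $|(\phi\circ u)'|\le g(u)\,|u'|$ with the monotonicity of $\phi\circ u$, the EDI, and the inequality $ab\le \tfrac12 a^2+\tfrac12 b^2$ (equality iff $a=b$), forces equality throughout. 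These identities are the key technical tool behind both implications.

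For \ref{itm: prop ii}~$\Rightarrow$~\ref{itm: prop i}: Take any $v\in\UU(\phi,g)$ with $u\succ v$, so $u(t)=v(\sfz(t))$ for some increasing $1$-Lipschitz $\sfz$ with $\sfz(0)=0$. The metric chain rule gives $|u'|(t)=|v'|(\sfz(t))\,\sfz'(t)$ for $\LL^1$-a.e.\ $t$; inserting $|u'|=g(u)$, $|v'|=g(v)$ and $u(t)=v(\sfz(t))$ collapses this to $g(u(t))=g(u(t))\,\sfz'(t)$ a.e. By (\ref{eq: prop ii}), $g(u(t))>0$ for $\LL^1$-a.e.\ $t\in[0,T_\star(u))$, so $\sfz'\equiv 1$ a.e.\ there and $\sfz(t)=t$ on $[0,T_\star(u)]$, giving $v=u$ on that interval. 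For $t>T_\star(u)$ the identity $u(t)=u(T_\star(u))=v(\sfz(t))$ with $\sfz(t)\ge T_\star(u)$, coupled with a short invocation of \ref{itm: 3} (using $\RR[v]\subset\overline{\RR[u]}=\RR[u]$, which holds since $T_\star(u)<+\infty$ here), excludes $v$ taking any value different from $v(T_\star(u))$ beyond $T_\star(u)$, whence $v=u$.

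For \ref{itm: prop i}~$\Rightarrow$~\ref{itm: prop ii} I argue by contradiction. Suppose $u\in\UU_{\text{min}}$ but $\LL^1(S)>0$, with $S:=\{t\in[0,T_\star(u)):g(u(t))=0\}$. The strategy is to reparametrize time so as to skip the ``frozen'' set $S$, producing a strictly faster solution. Put
\[
\sigma(t) := \int_0^{t}\mathbf{1}_{S^c\cap[0,T_\star(u))}(r)\,dr + (t-T_\star(u))^+ ,\qquad t\geq 0,
\]
an increasing $1$-Lipschitz map with $\sigma(0)=0$ and $\sigma([0,+\infty))=[0,+\infty)$, and define $v$ by $v(\sigma(t)):=u(t)$. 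Well-definedness is the first nontrivial check: if $\sigma(t_1)=\sigma(t_2)$ with $t_1<t_2$, a short case analysis forces $t_2\le T_\star(u)$ and $[t_1,t_2]\setminus S$ Lebesgue-null; since $|u'|=g(u)=0$ a.e.\ on $S$ and $u\in AC_{\text{loc}}$, this yields $u(t_1)=u(t_2)$, so $v$ is unambiguous on $[0,+\infty)$.

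It remains to verify $v\in\UU(\phi,g)$. Using $\sigma'(r)=\mathbf{1}_{S^c\cap[0,T_\star(u))}(r)+\mathbf{1}_{(T_\star(u),+\infty)}(r)$ together with $|u'|=g(u)=0$ on $S\cup[T_\star(u),+\infty)$ and $|u'|=g(u)$ on $S^c\cap[0,T_\star(u))$, a change of variables will produce
\[
d(v(s_1),v(s_2)) \ \le \ \int_{s_1}^{s_2} g(v(s))\,ds, \qquad \phi(v(s_1))-\phi(v(s_2)) \ = \ \int_{s_1}^{s_2} g^2(v(s))\,ds
\]
for $0\le s_1\le s_2$, whence $|v'|\le g(v)$ a.e.\ and the EDI for $v$ follow. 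By construction $u(t)=v(\sigma(t))$ and $\RR[v]=\RR[u]\subset\overline{\RR[u]}$, so $u\succ v$; and $\LL^1(S)>0$ forces $\sigma(T_\star(u))<T_\star(u)$, hence $T_\star(v)\le\sigma(T_\star(u))<T_\star(u)$ and $v\ne u$, contradicting minimality. The main obstacle I foresee is precisely this change-of-variables verification that $v\in\UU(\phi,g)$; the strong upper gradient assumption is indispensable here because it is what recasts the EDI of $u$ in the sharp equality form above, which is invariant under collapsing the $S$-intervals of constancy.
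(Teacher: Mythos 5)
Your proof follows essentially the same strategy as the paper's: the identities $|u'|=g(u)$ a.e.\ and the energy \emph{equality}, the chain rule forcing $\sfz'\equiv 1$ for the implication \ref{itm: prop ii}$\Rightarrow$\ref{itm: prop i}, and, for the converse, a reparametrization that deletes the times where $g(u)=0$ followed by an appeal to minimality. In fact your $\sigma$ is the same map as the paper's $\sfz=\vartheta\circ\sfx$ (both are increasing, $1$-Lipschitz, absolutely continuous, with derivative equal to $\mathbf{1}_{\{g(u)>0\}}$ a.e.\ on $[0,T_\star(u))$); you merely describe it directly instead of going through arc length and the absolutely continuous part of its inverse. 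The change-of-variables verification that the reparametrized curve again satisfies the energy dissipation inequality is only sketched, but the sketch is sound and you correctly identify the strong-upper-gradient hypothesis as what makes it work.

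There is, however, one genuine gap in the implication \ref{itm: prop i}$\Rightarrow$\ref{itm: prop ii}: the claim $\sigma([0,+\infty))=[0,+\infty)$ is false in general. If $T_\star(u)=+\infty$, the term $(t-T_\star(u))^+$ vanishes identically and $\sigma(t)=\LL^1\bigl(S^c\cap[0,t]\bigr)$, which is bounded whenever $\LL^1([0,+\infty)\setminus S)=:\theta<+\infty$ --- a case that can occur (the solution keeps moving forever, but only on a set of times of finite measure). Then $v$ is defined only on $[0,\theta)$ and is \emph{not} yet an element of $\UU(\phi,g)$: one must prove that the limit $\lim_{t\uparrow+\infty}u(t)$ exists and that extending $v$ by this constant value produces a curve in $\UU(\phi,g)$. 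This is precisely what hypothesis \ref{itm: 4}, established for $\UU(\phi,g)$ in Theorem \ref{thm: gf as gls} using the completeness of $\SS$ and the quadratic lower bound on $\phi$, supplies, and it is the point where the paper's proof explicitly invokes it. Relatedly, your closing contradiction ``$T_\star(v)\le\sigma(T_\star(u))<T_\star(u)$'' only parses when $T_\star(u)<+\infty$; for $T_\star(u)=+\infty$ you should instead pick $t$ with $\sigma(t)<t$ and contradict the injectivity of the minimal solution $u$ on $[0,T_\star(u))$ (Theorem \ref{thm: minimal solution}, \ref{itm: T2}) via $u(\sigma(t))=v(\sigma(t))=u(t)$. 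With these repairs the argument is complete and coincides with the paper's.
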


\begin{proof}
First we notice some properties of $\UU(\phi, g)$ if $g$ is a strong upper gradient (cf. Remark \ref{rem: sug}): every solution $u\in\UU(\phi, g)$ satisfies 
\begin{equation}\label{eq: EDE}
\phi(u(s)) - \phi(u(t)) \ = \ \frac{1}{2}\int_s^t{g^2(u(r)) \ dr} \ + \ \frac{1}{2}\int_s^t{|u'|^2(r) \ dr}
\end{equation}
for all $0\leq s\leq t < +\infty$, it holds that
\begin{equation}\label{eq: g u = u'}
g(u(r)) \ = \ |u'|(r) \quad \text{for } \LL^1\text{-a.e. } r\in[0, +\infty),  
\end{equation}
and $\phi\circ u$ is locally absolutely continuous with 
\begin{equation}\label{eq: (phicircu)'}
(\phi\circ u)'(r) \ = \ -g^2(u(r)) \ = \ -|u'|^2(r) \quad \text{for } \LL^1\text{-a.e. } r\in[0, +\infty). 
\end{equation}

\medskip
Let us show that \ref{itm: prop ii} implies \ref{itm: prop i}. Let $u\in\UU$ satisfy (\ref{eq: prop ii}) and suppose that $u\succ v$ for some $v\in\UU(\phi, g)$. Then $\RR[v]\subset\overline{\RR[u]}$ and there exists an increasing $1$-Lipschitz map $\sfz: [0, +\infty) \to [0, +\infty)$ with $\sfz(0) = 0$ such that $u(t) = v(\sfz(t))$ for all $t\geq 0$. The map $\sfz$ is differentiable $\LL^1$-a.e. in $[0, +\infty)$ and the chain rule for absolutely continuous functions (see e.g. [\cite{leoni2009first}, Thm. 3.44]) and (\ref{eq: (phicircu)'}) yield 
\begin{displaymath}
g^2(u(r)) = -(\phi\circ u)'(r)  =  -(\phi\circ v \circ \sfz)'(r)  =  g^2(v(\sfz(r)))\sfz'(r)  =  g^2(u(r)) \sfz'(r) 
\end{displaymath} 
for $\LL^1$-a.e. $r\in[0, +\infty)$. By (\ref{eq: prop ii}), it follows that $\sfz'(r) = 1$ for $\LL^1$-a.e. $r\in[0, T_\star(u))$, which implies $\sfz(t) = t$ for all $t\in[0, T_\star(u))$. This shows that $u = v$ and the claim is proved. 

\medskip
Now, we prove that \ref{itm: prop i} implies \ref{itm: prop ii}. Let $u$ be a minimal solution, with $T_\star(u)\in(0, +\infty]$. 
Let 
\begin{displaymath}
\Omega \ := \ \{t\in(0, T_\star(u)) \ : \ g(u(t)) > 0\}. 
\end{displaymath}
As $g$ is lower semicontinuous (\ref{eq: phi g lsc}), the set $\Omega$ is open. 

We define $\sfx: [0, T_\star(u)) \to [0, +\infty)$ as 
\begin{displaymath}
\sfx(t) := \int_0^t{|u'|(r) \ dr}. 
\end{displaymath} 
The map $\sfx$ is locally absolutely continuous; further, it is strictly increasing since $u$ is injective in $[0, T_\star(u))$ by Theorem \ref{thm: minimal solution}, \ref{itm: T2}. Let 
\begin{displaymath}
X:= \lim_{t\uparrow T_\star(u)}\sfx(t) = \int_0^{T_\star(u)}{|u'|(r) \ dr} \in(0, +\infty].
\end{displaymath}
There exists a strictly increasing, continuous inverse $\sfy: [0, X) \to [0, T_\star(u)),$
\begin{displaymath}
\sfy(\sfx(t))  =  t \quad \text{for all } t\in[0, T_\star(u)), \quad \sfx(\sfy(\rmx)) = \rmx \quad \text{for all } \rmx\in[0, X).  
\end{displaymath}
Since $\sfy$ is monotone, it is differentiable $\LL^1$-a.e. in $[0, X)$ and its derivative $\sfy'$ belongs to $L^1(0, X')$ for every $X' < X$. We define $\vartheta: [0, X) \to [0, +\infty)$, 
\begin{displaymath}
\vartheta(\mathrm x):= \int_0^\rmx{\sfy'(r) \ dr}.   
\end{displaymath}
The chain rule for absolutely continuous functions (see e.g. [\cite{leoni2009first}, Thm. 3.44]) applied to $\sfx\circ \sfy$ yields $\sfy'(r) > 0$ for $\LL^1$-a.e. $r\in (0, X)$. So it holds that
\begin{displaymath}
0 \ < \ \vartheta(\rmx_2) - \vartheta(\rmx_1) \ \leq \ \sfy(\rmx_2) - \sfy(\rmx_1) \quad \text{for all } 0\leq \rmx_1 < \rmx_2 < X, 
\end{displaymath}
and the map $\sfz: [0, T_\star(u)) \to [0, +\infty)$, defined as $\sfz:= \vartheta \circ \sfx$, is strictly increasing and $1$-Lipschitz, i.e.
\begin{displaymath}
0 \ < \ \sfz(t_2) - \sfz(t_1) \ \leq \ t_2 - t_1 \quad \text{for all } 0\leq t_1 < t_2 < T_\star(u). 
\end{displaymath}

The chain rule for absolutely continuous functions cannot be directly applied to $\sfy\circ \sfx$ since we do not know whether $\sfy$ is absolutely continuous or not, but imitating the proof of [\cite{leoni2009first}, Thm. 3.44], we obtain 
\begin{displaymath}
\sfy'(\sfx(t)) \sfx'(t) \ = \ 1 \quad \text{ a.e. in } \Omega.
\end{displaymath}
We used (\ref{eq: g u = u'}). By the chain rule, now applied to $\vartheta \circ \sfx$, it follows that
\begin{equation}\label{eq: sfz'}
\sfz'(t)  =  1 \quad\text{ a.e. in } \Omega, \quad \sfz'(t)  =  0 \quad\text{ a.e. in } [0, T_\star(u))\setminus \Omega. 
\end{equation}
Let
\begin{displaymath}
\theta \ := \ \lim_{\rmx\uparrow X} \vartheta(\rmx) \ = \ \int_0^X{\sfy'(r) \ dr} \in (0, +\infty].
\end{displaymath}
The map $\sfz$ has a strictly increasing, continuous inverse $\sft: [0, \theta) \to [0, T_\star(u))$. 

\medskip
We define $w: [0, \theta) \to \SS, \ w:= u\circ \sft$. It holds that
\begin{eqnarray*}
d(w(s), w(t)) \ \leq \ \int_{\sft(s)}^{\sft(t)}{|u'|(r) \ dr} \ = \ \sfx(\sft(t)) - \sfx(\sft(s)) 
\end{eqnarray*} 
for all $0\leq s \leq t < \theta$. Obviously, $\sfx\circ \sft$ is the inverse map of $\vartheta$. Since $\vartheta$ is locally absolutely continuous with $\vartheta'(r) = y'(r) > 0$ a.e. in $(0, X)$, its inverse $\sfx\circ \sft$ is locally absolutely continuous. By change of variables (see e.g. [\cite{leoni2009first}, Thm. 3.54]), we obtain  
\begin{displaymath}
\sfx(\sft(t)) - \sfx(\sft(s)) \ = \ \int_s^t{|u'|(\sft(r)) \sft'(r) \ dr} \quad 0\leq s \leq t < \theta. 
\end{displaymath}
It follows that $w\in AC_{\text{loc}}([0, \theta); \SS)$, i.e. the metric derivative 
\begin{displaymath}
|w'|(t):= \mathop{\lim}_{s\to t} \frac{d(w(s),w(t))}{|s-t|}
\end{displaymath} 
exists for $\LL^1$-a.e. $t\in(0, \theta)$, the function $t\mapsto |w'|(t)$ belongs to $L^1_{\text{loc}}(0, \theta)$ and 
\begin{displaymath}
d(w(s),w(t)) \leq \int^{t}_{s}{|w'|(r) \ dr}  \quad \text{for all } 0 \leq s\leq t < \theta;  
\end{displaymath}
moreover, it holds that 
\begin{equation}\label{eq: w' ch 4} 
|w'|(r) \leq |u'|(\sft(r))\sft'(r) \quad\text{a.e. in } (0, \theta)
\end{equation}
(cf. Definition \ref{def: lac curve}, [\cite{AmbrosioGigliSavare05}, Def. 1.1.1 and Thm. 1.1.2]). Applying the chain rule for absolutely continuous functions to $\sfz\circ \sft$, we obtain by (\ref{eq: sfz'}) that
\begin{equation}\label{eq: t' ch 4}
\sft'(r) \ = \ 1 \quad \text{a.e. in } \sfz(\Omega). 
\end{equation}
We note that the map
\begin{displaymath}
[0, \theta)\ni s \mapsto \int_0^{\sft(s)}{g^2(u(r)) \ dr}
\end{displaymath}
is strictly increasing and applying the chain rule for absolutely continuous functions, we obtain 
\begin{equation}\label{eq: ineq 1}
\int_{\sft(s_1)}^{\sft(s_2)}{g^2(u(r)) \ dr} \ \geq \ \int_{s_1}^{s_2}{g^2(u(\sft(r))) \sft'(r) \ dr}.
\end{equation}
Similarly, 
\begin{equation}\label{eq: ineq 2}
\int_{\sft(s_1)}^{\sft(s_2)}{|u'|^2(r) \ dr} \ \geq \ \int_{s_1}^{s_2}{|u'|^2(\sft(r)) \sft'(r) \ dr}.
\end{equation}
The curve $u$ satisfies the energy dissipation inequality (\ref{eq: EDI'}). Hence, combining (\ref{eq: w' ch 4}) - (\ref{eq: ineq 2}), we obtain
\begin{displaymath}
\phi(w(s)) - \phi(w(t)) \ \geq \ \frac{1}{2}\int_s^t{g^2(w(r)) \ dr} \ + \ \frac{1}{2}\int_s^t{|w'|^2(r) \ dr}
\end{displaymath}
for all $0\leq s \leq t < \theta$. 

\medskip
If $\theta < +\infty$ and $T_\star(u) = +\infty$, it holds that $w([0, \theta)) = \RR[u]$ and for every $T \in (0, \theta)$, the map $w_T: [0, +\infty) \to \SS$, 
\begin{displaymath}
w_T(s):= \begin{cases}
w(s) &\text{ if } 0\leq s \leq T \\
u(s - T + \sft(T)) &\text{ if } s > T
\end{cases}
\end{displaymath}
belongs to $\UU(\phi, g)$ (cf. the proof of Theorem \ref{thm: gf as gls}, \ref{itm: 2}). Since $\UU(\phi, g)$ satisfies \ref{itm: 4}, it follows that the limit $\lim_{t\uparrow +\infty} u(t) =: u_\star\in\SS$ exists, and $\bar{w}\in\UU(\phi, g)$, where
\begin{displaymath}
\bar{w}(t):= \begin{cases}
w(t) &\text{ if } 0\leq t < \theta \\
u_\star &\text{ if } t\geq \theta 
\end{cases}
\end{displaymath}
Moreover, it holds that $\bar{w}(\sfz(t)) = u(t)$ for all $t\in[0, +\infty)$. Hence, $u\succ \bar{w}$ which implies $u = \bar{w}$ since $u$ is minimal. We obtain \begin{equation}\label{eq: sfz = t}
\sfz(t) \ = \ t \quad \text{for all } t\in[0, T_\star(u))
\end{equation}
as $u$ is injective in $[0, T_\star(u))$. 

\medskip
If $T_\star(u) < +\infty$, then $\theta < +\infty$, and it is not difficult to see that $\bar{w}$ defined as above belongs to $\UU(\phi, g)$. Extending $\sfz$ by the constant value $\theta$, we again obtain $u\succ \bar{w}$, and thus (\ref{eq: sfz = t}). 

\medskip 
If $\theta = +\infty$ and $T_\star(u) = +\infty$, then $w\in\UU(\phi, g)$ and $u\succ w$, from which (\ref{eq: sfz = t}) follows. 

\medskip
So in any case, (\ref{eq: sfz = t}) holds. Taking into account (\ref{eq: sfz'}), we may conclude that
\begin{displaymath}
\LL^1([0, T_\star(u))\setminus \Omega) = 0. 
\end{displaymath}
This means that $u$ satisfies (\ref{eq: prop ii}). The proof is complete.    
\end{proof}

\paragraph{The strict monotonicity of $\phi$ along minimal solutions and (\ref{eq: prop ii})} 
Every minimal solution $u$ is injective in $[0, T_\star(u))$ due to Theorem \ref{thm: minimal solution}, \ref{itm: T2}. The functional $\phi$ is injective on $\RR[u]$ for every $u\in\UU(\phi, g)$ (Remark \ref{rem: injective}). It follows that $\phi$ is strictly decreasing along minimal solutions, i.e. 
\begin{equation}\label{eq: strict monotonicity}
\phi(u(t)) \ < \ \phi(u(s)) \quad \text{ for all } 0\leq s < t < T_\star(u), \quad u\in\UU_{\text{min}}(\phi, g), 
\end{equation}
where $\UU_{\text{min}}(\phi, g)$ denotes the collection of all the minimal solutions in $\UU(\phi, g)$. 

We note that 
(\ref{eq: strict monotonicity}) is not sufficient to conclude that a solution is minimal. In [\cite{sf2017}, Appendix A], we give an example of a one-dimensional gradient flow to a function whose derivative has a Cantor-like $0$ level set $K\subset \mathbb{R}$, and we construct a solution parametrized by a positive finite Cantor measure concentrated on $K$; this solution satisfies (\ref{eq: strict monotonicity}) but does not satisfy (\ref{eq: prop ii}) and is not minimal. The example illustrates that condition (\ref{eq: prop ii}) is stronger than (\ref{eq: strict monotonicity}) and that the strict monotonicity of the functional along a solution curve $u\in\UU(\phi, g)$ does not guarantee that $u\in\UU_{\text{min}}(\phi, g)$.

\medskip\noindent
\paragraph{Acknowledgements}

I would like to thank Giuseppe Savar\'e for stimulating discussions on this topic.

I gratefully acknowledge support from the Erwin Schr\"odinger International Institute for Mathematics and Physics (Vienna) during my participation in the programme ``Nonlinear Flows''.

I gratefully acknowledge support from the TopMath program, Technische Universit\"at M\"unchen.

\bibliographystyle{siam}
\bibliography{bibmaster}

\def\cprime{$'$}
\begin{thebibliography}{10}

\bibitem{Ambrosio-Fusco-Pallara00}
{\sc L.~Ambrosio, N.~Fusco, and D.~Pallara}, {\em Functions of bounded
  variation and free discontinuity problems}, Oxford Mathematical Monographs,
  Clarendon Press, Oxford, 2000.

\bibitem{AmbrosioGigliSavare05}
{\sc L.~Ambrosio, N.~Gigli, and G.~Savar\'e}, {\em Gradient Flows in Metric
  Spaces and in the Space of Probability Measures}, Lectures in mathematics ETH
  Z\"urich, Birkh\"auser, 2005.

\bibitem{babin1995attractor}
{\sc A.~V. Babin}, {\em Attractor of the generalized semigroup generated by an
  elliptic equation in a cylindrical domain}, Izvestiya: Mathematics, 44
  (1995), pp.~207--223.

\bibitem{babin1986maximal}
{\sc A.~V. Babin and M.~I. Vishik}, {\em Maximal attractors of semigroups
  corresponding to evolution differential equations}, Sbornik: Mathematics, 54
  (1986), pp.~387--408.

\bibitem{ball2000continuity}
{\sc J.~Ball}, {\em Continuity properties and global attractors of generalized
  semiflows and the {N}avier-{S}tokes equations}, in Mechanics: from theory to
  computation, Springer, 2000, pp.~447--474.

\bibitem{ball2004global}
{\sc J.~M. Ball}, {\em Global attractors for damped semilinear wave equations},
  Discrete and Continuous Dynamical Systems, 10 (2004), pp.~31--52.

\bibitem{DeGiorgi93}
{\sc E.~{De Giorgi}}, {\em New problems on minimizing movements}, in Boundary
  Value Problems for PDE and Applications, C.~Baiocchi and J.~L. Lions, eds.,
  Masson, 1993, pp.~81--98.

\bibitem{DeGiorgiMarinoTosques80}
{\sc E.~De~Giorgi, A.~Marino, and M.~Tosques}, {\em Problems of evolution in
  metric spaces and maximal decreasing curve}, Atti Accad. Naz. Lincei Rend.
  Cl. Sci. Fis. Mat. Natur. (8), 68 (1980), pp.~180--187.

\bibitem{Degiovanni-Marino-Tosques85}
{\sc M.~Degiovanni, A.~Marino, and M.~Tosques}, {\em Evolution equations with
  lack of convexity}, Nonlinear Anal., 9 (1985), pp.~1401--1443.

\bibitem{Evans2010}
{\sc L.~Evans}, {\em Partial Differential Equations}, vol.~19 of Graduate
  Studies in Mathematics, American Mathematical Society, second~ed., 2010.

\bibitem{sf2017}
{\sc F.~Flei{\ss}ner and G.~Savar{\'e}}, {\em Reverse approximation of gradient
  flows as minimizing movements: a conjecture by {D}e {G}iorgi}, submitted,
  (2017).

\bibitem{leoni2009first}
{\sc G.~Leoni}, {\em A first course in Sobolev spaces}, vol.~105, American
  Mathematical Society Providence, RI, 2009.

\bibitem{MarinoSacconTosques89}
{\sc A.~Marino, C.~Saccon, and M.~Tosques}, {\em Curves of maximal slope and
  parabolic variational inequalities on nonconvex constraints}, Ann. Scuola
  Norm. Sup. Pisa Cl. Sci. (4), 16 (1989), pp.~281--330.

\bibitem{melnik1998attractors}
{\sc V.~S. Melnik and J.~Valero}, {\em On attractors of multivalued semi-flows
  and differential inclusions}, Set-Valued Analysis, 6 (1998), pp.~83--111.

\bibitem{RossiSavare06}
{\sc R.~Rossi and G.~Savar{\'e}}, {\em {Gradient flows of non convex
  functionals in Hilbert spaces and applications}}, ESAIM: Control,
  Optimization and Calculus of Variations, 12 (2006), pp.~564--614.

\bibitem{rossi2011global}
{\sc R.~Rossi, A.~Segatti, and U.~Stefanelli}, {\em Global attractors for
  gradient flows in metric spaces}, Journal de Math{\'e}matiques Pures et
  Appliqu{\'e}es, 95 (2011), pp.~205--244.

\bibitem{sell1973differential}
{\sc G.~R. Sell}, {\em Differential equations without uniqueness and classical
  topological dynamics}, Journal of Differential Equations, 14 (1973),
  pp.~42--56.

\end{thebibliography}
\end{document}